\def\centerarc[#1](#2)(#3:#4:#5)
\definecolor{blue_links}{RGB}{13,0,180} 
\newtheorem{theorem}{Theorem}[section]
\newtheorem{lemma}[theorem]{Lemma}
\newtheorem{remark}[theorem]{Remark}
\newtheorem*{theorem*}{Theorem}
\newcommand{\N}{\mathbb{N}}
\newcommand{\R}{\mathbb{R}}
\newcommand{\C}{\mathbb{C}}
\newcommand{\CC}[1]{\C #1:#1}
\newcommand{\EEE}{\color{black}}
\newcommand{\RRR}{\color{red}}
\newcommand{\dist}{\operatorname{dist}}
\def\eps{\varepsilon}
\def\weakly{\rightharpoonup}
\def\dist{\operatorname{dist}}
\def\Xint#1{\mathchoice
    {\XXint\displaystyle\textstyle{#1}}%
    {\XXint\textstyle\scriptstyle{#1}}%
    {\XXint\scriptstyle\scriptscriptstyle{#1}}%
    {\XXint\scriptscriptstyle\scriptscriptstyle{#1}}%
\!\int}
\def\XXint#1#2#3{{\setbox0=\hbox{$#1{#2#3}{\int}$}
\vcenter{\hbox{$#2#3$}}\kern-.5\wd0}}
\def\dashint{\Xint-}
\numberwithin{equation}{section}
\begin{document} 

\title[Phase separation in a fractured material]{An elliptic approximation for \\ phase separation in a fractured material}
\author[K. Stinson] {Kerrek Stinson} 
\address[Kerrek Stinson]{Department of Mathematics, University of Utah, Salt Lake City, UT 84112, USA}
\email{kerrek.stinson@utah.edu}
\author[S. Wittig]{Solveig Wittig}
\address[Solveig Wittig]{University of Bonn, Endenicher Allee 62, 53115 Bonn,
Germany}

\RRR 
\subjclass[2020]{49J45, 74B99, 74G65, 74N99, 74R10}
\keywords{Phase separation, linear elasticity, fracture, free-boundary, free-discontinuity}

\EEE
\begin{abstract} 
We consider a free-boundary and free-discontinuity energy connecting phase separation and fracture in an elastic material. The energy excludes the contribution of phase boundaries in the cracked region, providing a heuristic approximation of the interfacial energy in the current material configuration. Our primary result shows that the sharp energy may be recovered via $\Gamma$-convergence from a modified Cahn--Hilliard energy coupled with an Ambrosio--Tortorelli-type approximation of the (linear) elastic and fracture energy.
\end{abstract}
\EEE

\maketitle

\section{Introduction}

In many materials, phase separation and damage processes occur simultaneously. For instance, in lithium-ion batteries (LiFePO$_4$), ions will separate into regions of high or low concentration, like oil and water. But to account for an influx of ions, the solid host material (FePO$_4$) must expand. As neighboring regions may have dramatically different concentrations of ions, the induced strain can vary suddenly, leading to large increases in elastic energy. At that point, it may be favorable to simply break the host material to offset the cost in elastic energy. Ultimately, these damage processes lead directly to a decrease in life and performance of the battery \cite{Bazant-Theory2013,Dal2015-Comp,Li_2020}. Overcoming these limitations requires phenomenologically accurate models that capture the interrelated effects of phase separation and damage.

The interaction between phase separation and damage is not unique to batteries and has been studied in a variety of contexts. Much work relies on describing the system's state by phase field energies. 
On a material domain $\Omega\subset \R^d$, prototypical choices are  the Cahn--Hilliard energy for the chemical energy given by
\begin{equation}\label{eqn:CH}
E_{\rm CH} : = \int_{\Omega} \left(\frac{1}{\eps}W(c) + \eps|\nabla c|^2 \right) dx,
\end{equation}
where $c:\Omega\to \R$ is the underlying concentration, $W(s)$ is a two-well function vanishing only at $s=0$ and $s=1$, and $0<\eps\ll 1$ is a parameter describing interface width, and an Ambrosio--Tortorelli-type approximation of the elastic and damage energy given by
\begin{equation}\label{eqn:AT}
E_{\rm AT} : = \int_{\Omega} (z^2+ \delta^2) \CC{(e(u)-ce_0)}\, dx + \int_{\Omega} \left(\frac{1}{\delta}V(z) + \delta|\nabla z|^2 \right) dx,
\end{equation}
depending on the material displacement $u:\Omega \to \R^d$, a fourth-order elastic tensor $\mathbb{C}$, the symmetric gradient $e(u) := (\nabla u + \nabla u^T)/2$, the lattice misfit $e_0\in \R^{d\times d}_{\rm sym}$, a damage variable $z:\Omega\to [0,1]$, a single well function $V(s)$ vanishing at $s=1$, and $0<\delta\ll 1$.
Using the underlying free energy $E_{\rm CH} + E_{\rm AT}$, Heinemann et al. \cite{HK_2013,heinemannKraus14,heinemannKraus_2015,HCRR_2017} considered a variety of models coupling phase separation, elasticity, and damage. In their framework, damage inhibits the transport of a concentration through the mobility; in the simplest setting, the Cahn--Hilliard equation is given by ${\partial_t c = {\rm div}(m(z) \nabla \mu)}$, where $\mu$ is the chemical potential (i.e., first variation of the free energy with respect to the concentration) and the mobility $m:[0,1]\to [0,1]$ is possibly degenerate with $m(0) = 0$. We refer also to \cite{O_Connor_2016} where a similar approach was used to investigate phase separation in batteries numerically.

Instead of using phase field models, one can use interface and free-discontinuity models to investigate phase boundaries and fracture---the connection being that the $\Gamma$-limits of \eqref{eqn:CH}, as $\eps\to 0$, and \eqref{eqn:AT}, as $\delta\to 0$, are a perimeter functional and a  Griffith-type energy, respectively.
A recent work of Bresciani et al. \cite{brescianiFriedrichMoraCorral} studies lower semi-continuity for free-discontinuity energies with both Eulerian (defined on the current configuration) and Lagrangian (defined on the reference configuration) contributions. Roughly summarizing their results in the special case of nonlinear elasticity with a deformation $u:\Omega\to \R^d$, a crack $K\subset \Omega$, and a concentration $c:u(\Omega\setminus K)\to \{0,1\}$, they show that energies penalizing elasticity, fracture, and the length of the phase boundary in the current configuration $u(\Omega\setminus K)$ are lower semi-continuous. We also refer to \cite{henaoMoraCorral2010,silhavy_2011} for related works on cavitation or phase boundaries in elastic domains.

The purpose of this paper is to analyze an elliptic (or diffuse) approximation for a model accounting directly for the interaction of damage and phase separation at the level of the energy. Herein, we tread the line between the two modeling paradigms implicit in the work of Heinemann et al. \cite{HK_2013,heinemannKraus14,heinemannKraus_2015,HCRR_2017} and Bresciani et al. \cite{brescianiFriedrichMoraCorral} as our energy is fully Lagrangian, and thus perhaps simply used for numerics, but simultaneously takes into account Eulerian-type effects by removing ``false'' phase boundaries from the free energy.
 Given a material domain $\Omega \subset \R^d$, a concentration $c:\Omega \to \{0,1\}$, and a material displacement $u\in W^{1,2}_{\rm loc}(\Omega \setminus K)$ breaking the material on the crack-set $K\subset \Omega$, we consider the energy
\begin{equation}\label{eqn:limitEnergyIntro}
E[c,(u,K)] : = \alpha_{\rm surf}\mathcal{H}^{d-1}(\partial \{c=1\}\setminus K) + \int_{\Omega\setminus K} \CC{(e(u) - ce_0)} \, dx + \alpha_{\rm frac}\mathcal{H}^{d-1}(K),
\end{equation}
where $\mathcal{H}^{d-1}$ is the Hausdorff (surface) measure. In the energy \eqref{eqn:limitEnergyIntro}: The first term accounts for the {interfacial} energy due to phase separation in $\Omega\setminus K$ where $\alpha_{\rm surf}>0$ is the interfacial energy density and $\partial \{c=1\}$ denotes the boundary within $\Omega$. As in \eqref{eqn:AT}, the second term is the elastic energy indicating a preferred strain due to the ion-concentration and the lattice misfit. And, the third term captures the energy needed to break the material, with $\alpha_{\rm frac}>0$ being the energy required per unit surface area. As a technical necessity, to ensure that our energy is amenable to tools from the calculus of variations, we will later replace the pair $(u,K)$ by a single function $u\in GSBD^2(\Omega)$ with its jump-set $J_u$ taking the role of~$K.$

\begin{figure}
\centering
\begin{tikzpicture}[x=1cm,y=1cm]
\draw [thick,dotted, fill=blue!10]  (0,-3.0) to[out=180,in=315] (-2,-2)  to[out=135,in=270] (-3,0) to[out=90,in=225] (-2,2.0) to[out=315,in=110] (1,0.8) to[out=200,in=45] (0,0.3) to[out=225,in=80] (-0.5,-0.8) to[out=350,in=90] (0,-3);
\draw [very thick,red] (-1,-1.5) to[out=40,in=260] (-0.5,-0.8) to[out=80,in=225] (0,0.3) to[out=45,in=200] (1,0.8) to[out=20,in=250] (2.0,2.0);
\draw [thick] (3,0) to[out=270,in=45] (2,-2.0) to[out=225,in=0] (0,-3.0) to[out=180,in=315] (-2,-2)  to[out=135,in=270] (-3,0) to[out=90,in=225] (-2,2.0) to[out=45,in=180] (0,3.0) to[out=0,in=135] (2,2) to[out=315,in=90] (3,0) ;
\node[] at (-1.7,-2.75) {$\Omega$};
\node[] at (-1.75,0.5) {$c=1$};
\node[] at (1.75,-0.5) {$c=0$};
\node[above] at (1.4,1.2) {$K$};
\begin{scope}[xshift=8cm]
\draw [thick,dotted, fill=blue!10]  (0,-3.0) to[out=180,in=315] (-2,-2)  to[out=135,in=270] (-3,0) to[out=90,in=225] (-2.25,2.0) to[out=315,in=110] (0.9,0.9) to[out=200,in=45] (-0.075,0.375) to[out=225,in=80] (-0.55,-0.75) to[out=260,in=40] (-1,-1.5) to[out=40,in=260] (-0.45,-0.85) to[out=350,in=90] (0,-3);
\draw [thick] (-1,-1.5) to[out=40,in=260] (-0.55,-0.75) to[out=80,in=225] (-0.075,0.375) to[out=45,in=200] (0.9,0.9) to[out=20,in=250] (1.75,2.25);
\draw [thick] (-1,-1.5) to[out=40,in=260] (-0.45,-0.85) to[out=80,in=225] (0.075,0.225) to[out=45,in=200] (1.1,0.7) to[out=20,in=250] (2.25,1.75);
\draw [thick] (2.25,1.75) to[out=315,in=90] (3,-0.3) to[out=270,in=45] (2,-2.25) to[out=225,in=0] (0,-3.0) to[out=180,in=315] (-2,-2)  to[out=135,in=270] (-3,0) to[out=90,in=225] (-2.25,2.0) to[out=45,in=180] (-0.3,3.0) to[out=0,in=135] (1.75,2.25)  ;
\node[] at (-2.75,-2.75) {$({\rm id}+u)(\Omega\setminus K)$};
\end{scope}
\draw [->] plot [smooth] coordinates {(3.0,1.2) (4.0,1.4) (5.0,1.2)};
\node[above] at (4.0,1.5) {${\rm id} + u$};
\end{tikzpicture}
\caption{The graphic illustrates the idea behind the first term in energy \eqref{eqn:limitEnergyIntro}. The transition $c$ makes from $0$ to $1$ on the crack $K$ is a ``false'' phase boundary only seen in the reference configuration, on the left side. In the current configuration, on the right side, this transition takes place on the boundary of the new domain.}
\label{fig:splitCrack}
\end{figure}
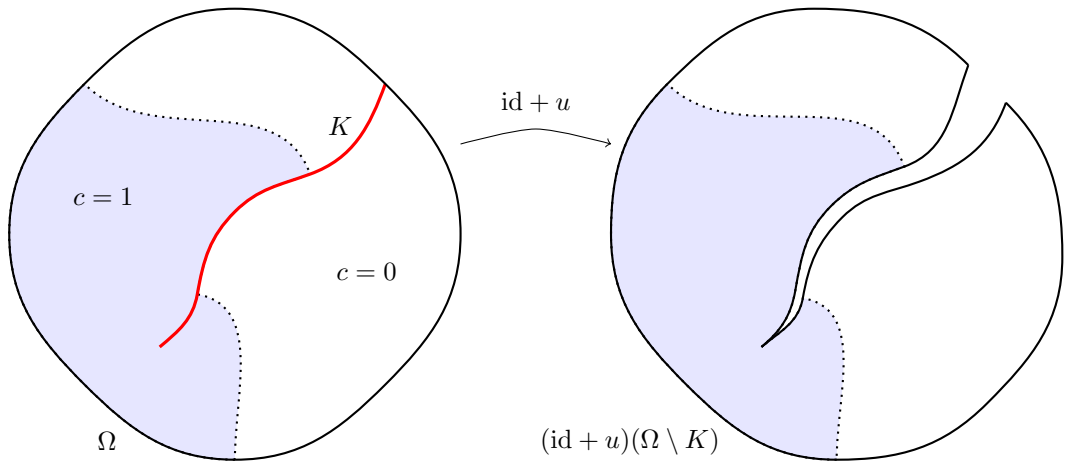

 The energy \eqref{eqn:limitEnergyIntro} is heuristically motivated by Figure \ref{fig:splitCrack}. In particular, one sees that when the crack opens up the material body, phase boundaries can no longer ``see" eachother across the crack in the current configuration, with the intuition behind the first term in \eqref{eqn:limitEnergyIntro} being that it approximates the phase boundary energy in the current configuration. Our approach is entirely Lagrangian, and while a truly Eulerian--Lagrangian energy might be needed to completely account for the effects of fracture on phase boundaries, such an approach brings a plethora of technical challenges even for lower semi-continuity (see \cite{brescianiFriedrichMoraCorral,silhavy_2011}), which is only half of the battle in approximation.

Recalling the notation in \eqref{eqn:CH} and \eqref{eqn:AT}, we introduce the phase-field energies approximating \eqref{eqn:limitEnergyIntro} given by
\begin{equation}\label{eqn:energyEpsIntro}
\begin{aligned}
E_{\eps,\delta}[c,u,z] : = & \int_{\Omega} \phi_\delta(z) \left(\frac{1}{\eps}W(c) + \eps |\nabla c|^2\right) dx \\
& \qquad  + \int_{\Omega} (z^2 + \delta^2)\CC{(e(u) - ce_0)}\, dx + \int_{\Omega} \left( \frac{1}{\delta}V(z) + \delta|\nabla z|^2 \right) dx,
\end{aligned}
\end{equation}
for $(c,u,z) \in H^1(\Omega)\times H^{1}(\Omega;\R^d) \times H^1(\Omega;[0,1])$, where $z$ is a damage variable providing a diffuse approximation of the crack-set with $\{z\neq 1 \}\approx K$. The precise assumptions on the functions appearing in \eqref{eqn:energyEpsIntro} are laid out in Section \ref{sec:mainResults}, but for now it suffices to say that $\phi_\delta(z)$ behaves like $|z| + \delta^2$. This modifies the simple addition of $E_{\rm CH} + E_{\rm AT}$ by removing the phase boundary energy in damaged regions.

Our primary result rigorously connects the energy \eqref{eqn:energyEpsIntro} to \eqref{eqn:limitEnergyIntro} via $\Gamma$-convergence (see, e.g., \cite{Brad02,DalMasoBook}), which we formally state below and refer to Theorem \ref{thm:main} for the complete statement. We emphasize that $\Gamma$-convergence is a natural notion for describing the convergence of variational problems that has been used within materials science and outside of it (see, e.g., \cite{bungertStinson2024,conti-SO2rigid,Modica87,stinsonLiBatteryGamma}).

\begin{theorem*} Suppose that $\eps \to 0$ and $\delta = \delta(\eps) \to 0$ with $\eps/\delta\to 0$. Then under the topology induced by convergence in measure for the triplet $(c,u,z)$, the diffuse energy \eqref{eqn:energyEpsIntro} $\Gamma$-converges to the sharp interface energy \eqref{eqn:limitEnergyIntro}.
\end{theorem*}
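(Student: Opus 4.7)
The plan is to prove the statement in the two standard halves: a liminf inequality along arbitrary sequences converging in measure, and the construction of a recovery sequence for every admissible limit. A preliminary compactness step shows that energy-bounded sequences satisfy $z_\eps\to 1$ in measure (forced by $V(z_\eps)/\delta\leq C$), $c_\eps\to c\in BV(\Omega;\{0,1\})$ (by a localized Modica--Mortola argument, using that $\phi_\delta(z_\eps)\to 1$ off a set of vanishing measure concentrating near the crack), and $u_\eps\to u\in GSBD^2(\Omega)$ with $J_u$ controlled by the asymptotic zero-set of $z_\eps$ (by now-standard AT-to-Griffith compactness for linear elasticity). The admissible limit triple is then $(c,u,1)$ with limit energy \eqref{eqn:limitEnergyIntro} for $K=J_u$.

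For the liminf, I would decouple the three pieces of $E_{\eps,\delta}$ and bound each separately. The elastic and AT-fracture contributions form a variant of the Ambrosio--Tortorelli approximation of the linearized Griffith energy; the only new ingredient is the lattice-misfit coupling $c_\eps e_0$, which passes to the limit because $c_\eps\to c$ in $L^2(\Omega)$, so established AT-to-Griffith lower-semicontinuity results yield
\[
\liminf_\eps \int_\Omega (z_\eps^2+\delta^2)\,\CC{(e(u_\eps)-c_\eps e_0)}\,dx+\int_\Omega\left(\tfrac{1}{\delta}V(z_\eps)+\delta|\nabla z_\eps|^2\right)dx \geq \int_{\Omega\setminus J_u}\CC{(e(u)-ce_0)}\,dx+\alpha_{\rm frac}\mathcal{H}^{d-1}(J_u).
\]
For the modified Cahn--Hilliard piece, the Modica--Mortola chain-rule inequality
\[
\phi_\delta(z_\eps)\left(\tfrac{1}{\eps}W(c_\eps)+\eps|\nabla c_\eps|^2\right)\geq 2\phi_\delta(z_\eps)\,|\nabla\psi(c_\eps)|,\qquad \psi(s):=\int_0^s\sqrt{W(t)}\,dt,
\]
reduces the problem to passing to the limit in a modulated total variation. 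I would then blow up at $\mathcal{H}^{d-1}$-a.e.\ point of $\partial^*\{c=1\}\setminus J_u$: at such a point one can select a scale on which the AT energy forces $\phi_\delta(z_\eps)\to 1$ uniformly in the blow-up ball, and the classical one-dimensional Modica--Mortola bound in the limit delivers the sharp constant $\alpha_{\rm surf}=2\psi(1)$. A Radon--Nikodym decomposition of the limit measure against $\mathcal{H}^{d-1}\mres\partial^*\{c=1\}$ then completes the bound.

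For the recovery sequence, given admissible $(c,u)$, I would first apply a density result for $GSBD^2$ to reduce to $u$ smooth off a polyhedral jump set and $\partial\{c=1\}$ smooth, then build three local profiles and combine them diagonally. Set $z_\eps(x):=\rho_\delta(\dist(x,J_u))$ with $\rho_\delta$ the standard AT optimal profile, so that $z_\eps\approx 0$ inside a $\delta$-tube around $J_u$ and $z_\eps\approx 1$ outside; set $c_\eps$ to be the Modica--Mortola optimal profile built along the signed distance to $\{c=1\}$ outside the $\delta$-tube, and equal to $c$ inside the tube; and set $u_\eps$ to be a mollification of $u$ on the complement of a slightly smaller tube, extended arbitrarily inside. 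The assumption $\eps/\delta\to 0$ is used decisively here: it guarantees that the width-$\eps$ Cahn--Hilliard transitions at interior phase boundaries sit inside the region where $z_\eps\approx 1$, while any phase boundary that coincides with the crack contributes at most $\phi_\delta(0)\cdot O(1)=O(\delta^2)$ to the modified Cahn--Hilliard term, so ``false'' phase boundaries are discarded. The main obstacle is the liminf for the modulated Cahn--Hilliard energy: since $\partial^*\{c=1\}$ and $J_u$ are only $\mathcal{H}^{d-1}$-rectifiable and may intersect in complicated patterns, one cannot simply localize on open sets away from the crack and must instead carry out the blow-up analysis at generic rectifiable points, coupled with a quantitative use of the AT energy to control the level sets $\{z_\eps\leq 1-\eta\}$ near $J_u$.
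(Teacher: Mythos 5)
There is a genuine gap, and it sits exactly at the point the paper identifies as the main difficulty: you decouple the $\liminf$ into an Ambrosio--Tortorelli bound for the elastic/fracture part and a separate Modica--Mortola bound for the weighted Cahn--Hilliard part, claiming the latter alone yields $\alpha_{\rm surf}\mathcal{H}^{d-1}(\partial^*\{c=1\}\setminus J_u)$. That standalone inequality is false. At a point $x_0\in\partial^*\{c=1\}\setminus J_u$ the damage variable may dip to an intermediate value $m\in(0,1)$ on a $\delta$-neighborhood of the phase boundary, paying only the finite price $2(d_V(1)-d_V(m))$ in the $V$-term (not enough to open a crack, so no contribution to $J_u$), while reducing the weight to $\phi_\delta(z_\eps)\approx\phi(m)<1$ precisely where $c_\eps$ transitions; the Cahn--Hilliard term then delivers only $\phi(m)\,\alpha_{\rm surf}$ per unit area. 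Your blow-up claim that ``the AT energy forces $\phi_\delta(z_\eps)\to1$ uniformly in the blow-up ball'' does not hold, because the limit of the fracture measures may charge $\partial^*\{c=1\}\setminus J_u$ with a sub-critical density. The missing $(1-\phi(m))\alpha_{\rm surf}$ must be recovered from the leftover $V$-energy spent on the dip, which requires estimating the two terms \emph{jointly} and is exactly what hypothesis \eqref{ass:liminf}, $\phi(m)\alpha_{\rm surf}+4\int_m^1\sqrt{V}\ge\alpha_{\rm surf}$, is designed for (the paper does this in Step 3 of the one-dimensional $\liminf$ and then slices). Your proposal never invokes \eqref{ass:liminf}, which is a sign the ``energetic mixing'' has been overlooked.

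A second substantive issue is your compactness claim $u_\eps\to u$ in measure. The energy contains no lower-order term in $u$ and no boundary condition, so convergence of the displacements can only hold after subtracting piecewise infinitesimal rigid motions on a Caccioppoli partition $\mathcal{P}$, with the frames separating as in \eqref{eqn:separatingFrames}; the $\liminf$ must then also charge $\alpha_{\rm frac}$ on $\partial^*\mathcal{P}$, and the slicing argument has to be redone for these affinely modified sequences (this is a nontrivial step in the paper's proof). Finally, on the $\limsup$: your construction (distance-based AT and MM profiles, truncation of $u$ near the jump set, using $\eps/\delta\to0$ to hide the phase transition inside $\{z_\eps\approx0\}$ near $J_u$) matches the paper's, but the quantitative ingredient you would need---that $\mathcal{H}^{d-1}\big(\{\dist(\cdot,A)=r\}\setminus\{\dist(\cdot,M)<3r\}\big)\to\mathcal{H}^{d-1}(\partial A\setminus M)$, so that the surviving interfacial energy is exactly $\alpha_{\rm surf}\mathcal{H}^{d-1}(\partial^*\{c=1\}\setminus J_u)$ even when the two surfaces intersect irregularly---is not automatic and is the content of the paper's Lemma \ref{lem:jumpExclusion}; you have correctly sensed this difficulty but attributed it to the $\liminf$ rather than to the recovery sequence, where it actually must be resolved.
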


This result is also accompanied by a compactness theorem (see Theorem \ref{thm:compactness} below) showing that finite energy sequences $(c_{\eps},u_\eps,z_\eps)$ converge in measure, up to a subsequence, to a triplet $(c,u,1)$. To deal with a loss of control resulting from frame-indifference and no lower order control on $u$, the convergence holds for a piecewise-affine modification of the displacement $u$, an approach {also} adopted in \cite{FriedrichSolombrino18,friedrich2020,chambolleCrismale-Hetero}. In contrast to what is often done in the homogeneous setting of the Griffith energy (see, e.g., \cite{AlmiTasso_2023,chambolleCrismale18}), we keep track of domain pieces which ``travel off to infinity" to provide a robust $\Gamma$-convergence result that can, for instance, incorporate mass constraints on the concentration (see Remark \ref{rmk:massConstraint}). Fundamentally, our $\Gamma$-convergence result and compactness theorem show that minimizers of $E_{\eps,\delta}$ converge to minimizers of $E$, even with constraints.

We briefly comment on some points of mathematical interest in the $\Gamma$-convergence theorem.
Approximation via $\Gamma$-convergence stands on two pillars: a $\liminf$ and a $\limsup$ inequality. In the proof of the $\liminf$ inequality, {we must ensure} that the ``energetic mixing'' of phase energy and damage energy does not prevent lower semi-continuity. Additionally, we show how slicing can be done with the convergence of piecewise-affine modified displacements. The proof of the $\limsup$ inequality requires two technical lemmas; the first an approximation for sets of finite perimeter, under which the energy $\mathcal{H}^{d-1}(\partial\{c=1\}\setminus K)$ is stable, and the second a means to exclude the phase boundary energy in damaged regions for the diffuse recovery sequence.

\textbf{Organization.} In the next section, we introduce notation and discuss some mathematical necessities, such as $BV$ and $GSBD$ function spaces and sets of finite perimeter. In Section \ref{sec:mainResults}, we state our results in technical detail. In Section~\ref{sec:compactness}, we prove the associated compactness theorem. The proof of the $\Gamma$-limit is broken into Sections~\ref{sec:liminf} and~\ref{sec:limsup} for the $\liminf$ and $\limsup$ inequality, respectively.

\section{Notation and mathematical preliminaries}\label{sec:mathPrelim} 
We use $C>0$ to represent a generic constant, potentially changing from line to line. To emphasize dependence of the constant on a parameter $\rho$, we will write $C(\rho)$ or $C_\rho$. We use $\mathcal{L}^d$ for the $d$-dimensional Lebesgue measure and $\mathcal{H}^{d-1}$ for the $(d-1)$-dimensional Hausdorff measure. We will occasionally use the phrase \emph{infinitesimal rigid motion} to mean an affine map $a(x) = Ax + b$, where $A\in \R^{d\times d}_{\rm skew}$ and $b \in \R^d$. We use $\R^{d\times d}_{\rm sym}$ and $\R^{d\times d}_{\rm skew}$ for symmetric and skew-symmetric matrices, respectively. When possible, we will denote the derivative of a function $v:\R\to \R$ by $v'$ but occasionally for notational clarity use $\frac{d}{dt}v.$

Throughout, we will use fine properties of $BV(\Omega)$ or $GSBD^2(\Omega)$ functions for an open and bounded set $\Omega\subset \R^d$. 
We recall that $v\in BV(\Omega)$ if and only if $v\in L^1(\Omega)$ and its distributional gradient $D v$ is a Radon measure with finite total variation (in $\Omega$). For a function $v$ within $BV(\Omega)$, we use $\nabla v$ to denote the absolutely continuous part of the gradient. The jump-set of $v$ is denoted by $J_v$, which consists of the points in $\Omega$ for which $v$ is not continuous in a measure-theoretic sense. At $\mathcal{H}^{d-1}$-almost every $x\in J_v$, there are approximate one-sided limits which we denote by $v^+(x)$ and $v^-(x)$, with $v^+(x)>v^-(x)$ (note the $\pm$ has nothing to do with a chosen orientation for the normal to $J_v$). For $p\in [1,\infty)$, we say that $v\in SBV^p(\Omega)$ if $\nabla v \in L^p(\Omega;\R^d)$, $\mathcal{H}^{d-1}(J_v)<\infty$, and $Dv$ has no Cantor part.

We say that $A\subset \Omega$ is a set of finite perimeter (relative to $\Omega$) if $\chi_A\in BV(\Omega)$, where $\chi_A$ is the characteristic function of the set taking the value $1$ on $A$ and $0$ otherwise. We denote the reduced boundary (in $\Omega$) of a set of finite perimeter $A$ by $\partial^* A$. A Caccioppoli partition of $\Omega$ is a pairwise disjoint collection of sets of finite perimeter $\mathcal{P}:=\{P_i\}_{i\in \N}$ contained in and covering $\Omega$ (up to a null-set) with bounded cumulative perimeter, i.e., $$\mathcal{L}^d(\Omega \setminus (\cup_i P_i)) = 0 \quad \text{ and }\quad \sum_i \mathcal{H}^{d-1}(\partial^* P_i)<\infty.$$ We will occasionally use the shorthand $\partial^* \mathcal{P}: = \cup_i \partial^*P_i.$ See \cite{AmbrosioFuscoPallara} for more information on $BV$ functions, sets of finite perimeter, and Caccioppoli partitions.

The function space $GSBD^2(\Omega)$ of functions $u:\Omega \to \R^d$ is essentially the class of functions for which the Griffith energy is finite, i.e.,
$$\int_{\Omega} |e(u)|^2\, dx  + \mathcal{H}^{d-1}(J_u)<\infty,$$
where in parallel to $BV$ functions, $e(u)$ is the absolutely continuous part of the symmetric gradient and $J_u$ is the set of discontinuity points for $u$---however, these must be {carefully defined via slicing since there is no control on the full gradient}. We will rely on slicing for the proof of the $\liminf$ inequality for $d>1$ and introduce the necessary machinery in Remark \ref{rmk:slicing}. As we will otherwise be able to avoid technical complications, we refer the interested reader to the canonical paper \cite{dalMasoGSBD} for further information on $GSBD$ functions.

\section{Model and main results}\label{sec:mainResults}

Throughout, we let $\Omega \subset \R^{d}$ be a bounded open set with Lipschitz boundary and   $\eps_n \to 0$, $\delta_n \to 0$ be given sequences; we always drop the subscript $n$ and implicitly take $\delta = \delta(\eps)$, while keeping in mind these refer to sequences.
Our diffuse energy is given by 
\begin{equation}\label{eqn:energyEps}
\begin{aligned}
E_{\eps,\delta}[c,u,z] : = & \int_{\Omega} \phi_\delta(z) \left(\frac{1}{\eps}W(c) + \eps |\nabla c|^2\right) dx \\
& \qquad  + \int_{\Omega} (z^2 + \delta^2)\CC{(e(u) - ce_0)}\, dx + \int_{\Omega} \left( \frac{1}{\delta}V(z) + \delta|\nabla z|^2 \right) dx,
\end{aligned}
\end{equation}
if $(c,u,z) \in H^1(\Omega) \times H^1(\Omega;\R^d) \times H^1(\Omega;[0,1])$ and $+\infty$ otherwise. Here $e_0 \in \R^{d\times d}_{\rm sym}$ is the lattice misfit matrix, encoding the difference in solid structure between the saturated ($c=1$) and unsaturated ($c=0$) phases, and $\C:\R^{d\times d}\to \R^{d\times d}_{\rm sym}$ is a positive-definite fourth-order tensor of material constants with $$ C |\xi|^2 \geq \CC{\xi} \geq \frac{1}{C}\left|\xi\right|^2 \quad \text{ for all }\xi \in \R^{d\times d}_{\rm sym}$$ for a positive constant $C>0.$
We constrain the functions in the energy as follows: We assume that the double-well function $W$ satisfies
\begin{equation}\label{ass:W0}
\text{$W: \R\to [0,\infty) $ is continuous and $W(s) = 0$ if and only if $s = 0,1$,}
\end{equation}
and is $L^1$-coercive as given by
\begin{equation}\label{ass:W1}
\text{there exists $C>0$ such that $W(s) \ge  \frac{|s|}{C}$ for all $|s|\ge C$}.
\end{equation}
We assume that the single-well function $V$ satisfies $V'\in C^1([0,1])$, $V'|_{[0,1]}\leq 0$, 
\begin{equation}\label{ass:V0}
\text{$V: [0,1] \to [0,\infty) $ is continuous and $V(s) = 0$ if and only if $s = 1$.}
\end{equation}
We impose that
\begin{equation}\label{ass:phase<frac}
\int_0^1 \sqrt{W(s)}\, ds \le 2\int_0^1 \sqrt{V(s)}\, ds
\end{equation}
and
 take $\phi\in C([0,1])$ to be a continuous, non-decreasing function with $\phi(0)=0$ and $\phi(1)=1$, such that $\phi$ is strictly positive on $(0,1]$ with
 \begin{equation}\label{ass:liminf}
  \int_0^1 \sqrt{W(s)}\, ds \le 2\int_{m}^1 \sqrt{V(s)}\, ds +  \phi(m) \int_0^1 \sqrt{W(s)}\, ds \quad \text{ for any } m \in [0,1].
 \end{equation}
 Given \eqref{ass:phase<frac}, one possible choice satisfying \eqref{ass:liminf} is  $\phi(m): = \int_{0}^m\sqrt{V(s)}\, ds /(\int_{0}^1\sqrt{V(s)}\, ds)$.
 With this, we define $\phi_\delta : = \phi + C_\delta \in C([0,1])$, where $\{C_\delta\}_{\delta} \subset (0,\infty)$ is any positive vanishing sequence, i.e., $C_\delta \to 0$ as $\delta \to 0.$ 
We remark that the condition \eqref{ass:liminf} prevents the ``energetic mixing'' of cracks and phase boundaries in the diffuse energy, with its role becoming apparent in the $\liminf$ inequality for $\Gamma$-convergence.

We introduce the limiting energy 
\begin{equation}\label{eqn:limitEnergy}
E[c,u] : = \alpha_{\rm surf}\mathcal{H}^{d-1}(\partial^* \{c=1\}\setminus J_u) + \int_{\Omega\setminus J_u} \CC{(e(u) - ce_0)} \, dx + \alpha_{\rm frac}\mathcal{H}^{d-1}(J_u),
\end{equation}
 if $(c,u)\in BV(\Omega;\{0,1\})\times GSBD^2(\Omega)$ (implicitly $z=1$ almost everywhere) and $+\infty$ otherwise, where 
\begin{equation}\label{eqn:energyDensities}
\alpha_{\rm surf} : = 2\int_0^1 \sqrt{W(s)}\, ds \quad \text{ and } \quad \alpha_{\rm frac} : = 4\int_0^1 \sqrt{V(s)}\, ds.
\end{equation}
At a practical level, one can now see that \eqref{ass:phase<frac} ensures that $\alpha_{\rm surf}\leq \alpha_{\rm frac}$, a necessity for \eqref{eqn:limitEnergy} to be lower semi-continuous. 

Before stating our primary result, we motivate the topology considered therein with the following compactness result for finite energy sequences.
\begin{theorem}\label{thm:compactness}
Let $\Omega \subset \R^{d}$ be an open, bounded set with Lipschitz boundary and suppose  $\eps \to 0$, $\delta = \delta(\eps) \to 0$  with $\eps/\delta \to 0$. Further suppose assumptions \eqref{ass:W0}-\eqref{ass:liminf} hold. If 
$$\limsup_{\eps\to 0}E_{\eps,\delta}(c_\eps,u_\eps,z_\eps) =: C_0 <\infty,$$
then up to a subsequence in $\eps$, $c_\eps \to c \in BV(\Omega ;\{0,1\})$ in measure and $z_\eps\to 1$ in measure. Further, there exists a Caccioppoli partition of $\Omega$ given by $\mathcal{P} = \{P_i\}_{i\in \N}$ and infinitesimal rigid motions (associated to $u_\eps$ on each $P_i$) $a_{\eps,i} (x) : = A_{\eps,i}x + b_{\eps,i}$, with $A_{\eps,i}\in \R^{d\times d}_{\rm skew}$ and $b_{\eps,i}\in \R^d$, such that
\begin{equation}\label{eqn:displacementConvergence}
\left(u_\eps - \sum_i a_{\eps,i} \chi_{P_i}\right) \to u \in GSBD^2(\Omega) \quad \text{ in measure.} 
\end{equation} Further the partition has finite length in the sense that
\begin{equation}\label{eqn:perimControl}
\sum_{i} \mathcal{H}^{d-1}(\partial^*P_i) \leq C(C_0) 
\end{equation}
and the rigid motions separate in the sense that
\begin{equation}\label{eqn:separatingFrames}
|a_{\eps,i}(x) - a_{\eps,j}(x)| \to \infty \quad \text{ as } \eps\to 0 \text{ for almost every }x\in \Omega \text{ and any } i\neq j.
\end{equation}
\end{theorem}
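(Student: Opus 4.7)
The plan is to extract compactness term-by-term from the uniform energy bound $C_0\geq E_{\eps,\delta}(c_\eps,u_\eps,z_\eps)$, starting with the simplest variable and building toward the $GSBD^2$-compactness of the displacement. Since $\int_\Omega V(z_\eps)\,dx\leq \delta C_0\to 0$, the continuity and non-negativity of $V$ together with $V(s)=0$ iff $s=1$ yield $z_\eps\to 1$ almost everywhere along a subsequence, hence in measure.

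For $c_\eps$, fix $\eta\in(0,1)$ and let the good set be $\omega_\eps^\eta:=\{z_\eps\geq 1-\eta\}$; by the previous step, $\mathcal{L}^d(\Omega\setminus\omega_\eps^\eta)\to 0$, and on $\omega_\eps^\eta$ one has $\phi_\delta(z_\eps)\geq \phi(1-\eta)=:\kappa_\eta>0$. Writing $G(s):=\int_0^s\sqrt{W(t)}\,dt$, the Modica--Mortola chain rule together with Cauchy--Schwarz gives
\begin{equation*}
\int_{\omega_\eps^\eta}|\nabla G(c_\eps)|\,dx\leq \int_{\omega_\eps^\eta}\sqrt{W(c_\eps)}\,|\nabla c_\eps|\,dx\leq \frac{C_0}{2\kappa_\eta}.
\end{equation*}
Since $W$ is $L^1$-coercive by \eqref{ass:W1}, $G$ is proper on $\R$, so $\{G(c_\eps)\}$ is bounded in $BV(\omega_\eps^\eta)$ and relatively compact in $L^1(\omega_\eps^\eta)$. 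Combined with $W(c_\eps)\to 0$ in $L^1(\omega_\eps^\eta)$, this gives $c_\eps\to c\in\{0,1\}$ pointwise a.e.\ on $\omega_\eps^\eta$; a diagonal procedure in $\eta\to 0$ extends the convergence to all of $\Omega$. Using $G(c)=G(1)\chi_{\{c=1\}}$ together with the uniform total-variation bound and lower semi-continuity, $c\in BV(\Omega;\{0,1\})$.

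The main obstacle is the compactness of $u_\eps$. On the good set $\omega_\eps:=\{z_\eps\geq 1/2\}$ one has $z_\eps^2+\delta^2\geq 1/4$, so $\|e(u_\eps)-c_\eps e_0\|_{L^2(\omega_\eps)}^2\leq 4C_0$, while $\int_\Omega(V(z_\eps)/\delta+\delta|\nabla z_\eps|^2)\,dx\leq C_0$ encodes a diffuse perimeter for $\Omega\setminus\omega_\eps$: a coarea argument (after adjusting $1/2$ to a suitable regular level) gives $\mathcal{H}^{d-1}(\partial^*\omega_\eps\cap\Omega)\leq C(C_0)$. The strategy is then to construct a companion $\tilde u_\eps\in GSBD^2(\Omega)$ by setting $\tilde u_\eps:=u_\eps$ on $\omega_\eps$ and extending trivially to $\Omega\setminus\omega_\eps$, so that $J_{\tilde u_\eps}\subset\partial\omega_\eps$ has controlled $(d-1)$-measure and $\tilde u_\eps$ has uniformly bounded Griffith energy. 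Applying the piecewise-infinitesimal-rigid-motion compactness theorem of \cite{FriedrichSolombrino18} (see also \cite{chambolleCrismale-Hetero,friedrich2020}) to $\tilde u_\eps$ produces the Caccioppoli partition $\mathcal{P}=\{P_i\}$ satisfying \eqref{eqn:perimControl}, the infinitesimal rigid motions $a_{\eps,i}$ with the separation property \eqref{eqn:separatingFrames}, and a $GSBD^2(\Omega)$-limit $u$ of $\tilde u_\eps-\sum_i a_{\eps,i}\chi_{P_i}$; since $u_\eps=\tilde u_\eps$ on $\omega_\eps$ and $\mathcal{L}^d(\Omega\setminus\omega_\eps)\to 0$, the convergence transfers to $u_\eps$. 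The hardest ingredients are the coarea selection of a superlevel set of $z_\eps$ compatible with $u_\eps\in H^1$, and the verification of \eqref{eqn:separatingFrames}: the latter reflects that if two rigid motions on distinct pieces failed to separate on a positive-measure set, they could be merged into a single piece, contradicting the non-redundant extraction of $\mathcal{P}$.
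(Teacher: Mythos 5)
Your overall architecture matches the paper's: restrict to a ``good set'' where $z_\eps$ is bounded below, run Modica--Mortola/$BV$ compactness for $c_\eps$ there, and feed a truncated displacement with bounded Griffith energy into the piecewise-rigid-motion compactness theorem of Chambolle--Crismale/Friedrich--Solombrino. However, two steps as written have genuine gaps. First, for $c_\eps$ you work on the fixed superlevel sets $\omega_\eps^\eta=\{z_\eps\ge 1-\eta\}$ and claim that a $BV$ bound on $G(c_\eps)$ over $\omega_\eps^\eta$ gives $L^1$-relative compactness. But $\omega_\eps^\eta$ varies with $\eps$ and a superlevel set at a \emph{fixed} level need not have controlled perimeter, so you cannot extend $G(c_\eps)\chi_{\omega_\eps^\eta}$ to a bounded sequence in $BV(\Omega)$ and no compactness theorem applies on the moving domain. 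You invoke the coarea selection of a good level only for the $u$-step; it is needed already here, and in addition one must control the trace of $G(c_\eps)$ on the boundary of the good set (the paper does this by truncating $\sqrt{W}$ at $M=\|W\|_{L^\infty([0,1])}$ in the definition of $d_W$ and by a \emph{second} coarea selection in $c_\eps$ producing $A_\eps\approx\{|c_\eps|\le 3\}\cap B_\eps$ with $\mathcal{H}^{d-1}(\partial^*A_\eps)\le C$), since otherwise the jump of the extension across $\partial^*\omega_\eps^\eta$ is not summable. Your untruncated $G(s)=\int_0^s\sqrt{W}$ is also not obviously bounded in $L^1$ under only the linear coercivity \eqref{ass:W1}.

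Second, and more seriously, the Griffith bound for $\tilde u_\eps=u_\eps\chi_{\omega_\eps}$ does not follow from $\|e(u_\eps)-c_\eps e_0\|_{L^2(\omega_\eps)}^2\le 4C_0$ alone: you need $\|e(\tilde u_\eps)\|_{L^2(\Omega)}$, hence an $L^2$ bound on $c_\eps e_0$ over the good set. Assumption \eqref{ass:W1} only gives linear growth of $W$, so the energy yields at best an $L^1$ bound on $c_\eps$ there, not $L^2$. This is exactly why the paper introduces the additional level-set truncation in $c_\eps$ (the set $A_\eps$ on which $|c_\eps|\le 3$) before applying \cite[Theorem 1.1]{chambolleCrismaleEquilibrium}; without it the hypothesis of the compactness theorem is not verified. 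Your heuristic for \eqref{eqn:separatingFrames} (merge non-separating pieces) is the right intuition but is supplied directly by the cited theorem, so that part is fine once the correct object $\tilde u_\eps$ is in hand.
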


We emphasize that the convergence of $u_\eps$ can only hold up to affine modifications. Related approaches have been developed in a variety of works \cite{friedrich2018-piecewiseKorn,FriedrichSolombrino18,friedrich2020,kholmatovPiovano}, and our result is primarily a consequence of the results in Chambolle and Crismale in \cite{chambolleCrismale-Hetero,chambolleCrismaleEquilibrium}. For the standard Griffith energy, typically one can avoid worrying about the pieces that travel off to infinity by simply setting $u = 0$ there. In our setting however, there is a heterogeneity due to the concentration $c$ and the finer information coming from the partition $\mathcal{P}$ is necessary for a complete view of the limiting energy---in particular, for convergence of minimizers under constraints. Finally, we remark that \eqref{eqn:perimControl} provides a suboptimal control on the length of the boundary, as one would like to ensure that $\mathcal{H}^{d-1}(\cup \partial^* P_i)$ is controlled by the diffuse crack energy, since points in the boundary of the partition should effectively belong to the crack due to \eqref{eqn:separatingFrames}. This intuition is verified by the $\liminf$ inequality of Theorem \ref{thm:main} below. For convenience, given a Caccioppoli partition $\mathcal{P} = \{P_i\}_{i\in \N}$ of $\Omega$, we use the notation $\partial^* \mathcal{P} : = \cup_i \partial^* P_i$.

Our primary result is the following theorem.
\begin{theorem}\label{thm:main}
Let $\Omega \subset \R^{d}$ be a bounded open set with Lipschitz boundary and suppose  $\eps \to 0$, $\delta = \delta(\eps) \to 0$  with $\eps/\delta \to 0$. Further suppose assumptions \eqref{ass:W0}-\eqref{ass:liminf} hold.  Then the energies $E_{\eps,\delta}$ defined in \eqref{eqn:energyEps} $\Gamma$-converge to $E$ defined in \eqref{eqn:limitEnergy} in the topology defined by convergence in measure for the triplet $(c,u,z)$. Precisely, the following holds:
\begin{enumerate}
\item \emph{($\liminf$ inequality).} For any sequence $(c_\eps, u_\eps, z_\eps)$ converging to $(c,u,1)$ in the sense of Theorem~\ref{thm:compactness}, we have
\begin{equation}\label{eqn:liminfInequality}
\begin{aligned}
\alpha_{\rm surf}\mathcal{H}^{d-1}(\partial^* \{c=1\}\setminus {(J_u\cup \partial^* \mathcal{P})}) + \int_{\Omega\setminus J_u} \CC{(e(u) - ce_0)} \, dx \, + \, &   \alpha_{\rm frac}\mathcal{H}^{d-1}(J_u\cup \partial^* \mathcal{P}) \\
&\leq \liminf_{\eps \to 0} E_{\eps,\delta} [c_\eps,u_\eps, z_\eps].
\end{aligned}
\end{equation}
\item \emph{($\limsup$ inequality).} For any ${(c,u)\in BV(\Omega;\{0,1\})\times GSBD^2(\Omega)}$, there is a sequence $(c_\eps, u_\eps, z_\eps)\in H^1(\Omega)\times H^1(\Omega; \R^d)\times H^1(\Omega;[0,1])$ converging to $(c,u,1)$ in measure such that 
\begin{equation}\nonumber
E[c,u] = \lim_{\eps \to 0} E_{\eps,\delta} [c_\eps,u_\eps, z_\eps].
\end{equation}
\end{enumerate}
\end{theorem}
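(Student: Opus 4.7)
\emph{Overall strategy.} The plan follows the two pillars of $\Gamma$-convergence. For the $\liminf$ inequality, I slice in an arbitrary direction $\nu$ and combine a Modica--Mortola/Ambrosio--Tortorelli-type estimate on each 1D slice with weak lower semicontinuity of the elastic term; the coupling between the phase and damage profiles is controlled by the key assumption \eqref{ass:liminf}. For the $\limsup$ inequality, I reduce by density to a geometrically regular subclass of pairs $(c,u)$ and glue together the standard 1D optimal profiles for the two sharp-interface terms, using the weight $\phi_\delta$ to switch off the phase-field energy inside a thin tube around $J_u$.

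\emph{Liminf inequality.} The elastic term is the simplest: the modified displacements $\tilde u_\eps := u_\eps - \sum_i a_{\eps,i}\chi_{P_i}$ satisfy $e(\tilde u_\eps)=e(u_\eps)$ away from $\partial^*\mathcal{P}$, so convergence in measure of $(\tilde u_\eps,c_\eps,z_\eps)\to(u,c,1)$ together with weak lower semicontinuity of the positive-definite quadratic form $\CC{(\cdot)}$ yields the bulk term. For the surface contributions I slice in every direction $\nu\in S^{d-1}$ (see Remark~\ref{rmk:slicing} for the $GSBD^2$ slicing framework); on $\mathcal{H}^{d-1}$-a.e.~line Young's inequality yields
\begin{equation*}
\phi_\delta(z_\eps)\bigl(\eps(c_\eps')^2 + W(c_\eps)/\eps\bigr) + V(z_\eps)/\delta + \delta(z_\eps')^2 \,\ge\, 2\phi_\delta(z_\eps)\sqrt{W(c_\eps)}\,|c_\eps'| + 2\sqrt{V(z_\eps)}\,|z_\eps'|.
\end{equation*}
After a change of variables, a phase transition of $c_\eps$ lying above a dip of $z_\eps$ to a minimum value $m$ produces at least $2\phi(m)\int_0^1\sqrt{W}\, ds + 4\int_m^1\sqrt{V}\, ds$, which by \eqref{ass:liminf} dominates $2\int_0^1\sqrt{W}\, ds=\alpha_{\rm surf}$. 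A full dip of $z_\eps$ to $0$ and back produces $4\int_0^1\sqrt{V}\, ds=\alpha_{\rm frac}$, while an isolated $c$-transition in a region where $z_\eps\approx 1$ produces exactly $\alpha_{\rm surf}$. Fatou and integration over slices then combine to give the surface bound with the correct splitting between $\partial^*\{c=1\}\setminus(J_u\cup\partial^*\mathcal{P})$ and $J_u\cup\partial^*\mathcal{P}$.

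\emph{Limsup inequality.} By the first technical lemma advertised in the introduction, I approximate $(c,u)$ in energy by a pair for which $J_u$ is relatively open and piecewise regular and $\partial^*\{c=1\}$ meets $J_u$ in an $\mathcal{H}^{d-1}$-null set. For such a pair, set $z_\eps(x):=\bar\zeta(\dist(x,J_u)/\delta)$ with $\bar\zeta$ the optimal Ambrosio--Tortorelli profile, extend $u$ to $u_\eps\in H^1(\Omega;\R^d)$ by an interpolation across a $\delta$-tube $T_\delta$ of $J_u$ in the style of \cite{chambolleCrismale18}, and set $c_\eps(x):=\bar\omega(s(x)/\eps)$ off the tube with $\bar\omega$ the optimal Modica--Mortola profile and $s$ the signed distance to $\{c=1\}$. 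Inside $T_\delta$ the weight $z_\eps^2+\delta^2$ is $O(\delta^2)$, so the elastic contribution vanishes, while $V(z_\eps)/\delta+\delta|\nabla z_\eps|^2$ integrates to exactly $\alpha_{\rm frac}\mathcal{H}^{d-1}(J_u)$ up to $o(1)$, and the transversal bulk profile of $c_\eps$ contributes $\alpha_{\rm surf}\mathcal{H}^{d-1}(\partial^*\{c=1\}\setminus J_u)$.

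\emph{The main obstacle.} The delicate step is the second technical lemma: inside $T_\delta$, one must modify $c_\eps$ so that it takes a constant value in $\{0,1\}$ — thereby avoiding any spurious phase-field contribution from ``false'' boundaries lying along $J_u$ — while still connecting to the bulk profile outside $T_\delta$ across $\partial^*\{c=1\}\setminus J_u$ in such a way that no excess interfacial energy is created at the matching. The regime $\eps/\delta\to 0$ is crucial here: it guarantees that the $\eps$-scale transition of $c_\eps$ across $\partial^*\{c=1\}\setminus J_u$ fits comfortably inside $\{z_\eps\approx 1\}$, so the weight $\phi_\delta(z_\eps)$ contributes a factor of essentially $1$ along that transition and the full $\alpha_{\rm surf}$ is charged. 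Controlling the interaction at points where $\partial^*\{c=1\}\setminus J_u$ approaches $J_u$ tangentially — which the transversality from the first lemma reduces to a lower-dimensional set — is the technical heart of this construction.
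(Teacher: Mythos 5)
Your overall architecture (1D optimal-profile estimates combined via slicing for the $\liminf$, density plus a tube construction for the $\limsup$) matches the paper's, and your use of \eqref{ass:liminf} to handle a $c$-transition sitting above a dip of $z_\eps$ to level $m$ is exactly right. However, there are genuine gaps. In the $\liminf$ inequality, the elastic term is not "the simplest": convergence in measure of the modified displacements gives no weak $L^2$ convergence of strains, and the weight $z_\eps^2+\delta^2$ degenerates near the diffuse crack, so $e(u_\eps)$ need not even be bounded in $L^2(\Omega)$. One must first identify the weak $L^2$ limit of $\sqrt{z_\eps^2+\delta^2}\,(e(u_\eps)-c_\eps e_0)$ as $e(u)-ce_0$; the paper does this by proving the sliced 1D bound with an arbitrary shift $w\in L^2$ and then a duality argument, after excising the finitely many intervals where $z_\eps$ drops below a threshold $\gamma$. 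Relatedly, your slicing step silently assumes that the convergence of Theorem \ref{thm:compactness} passes to 1D slices; but the modifications are skew-affine maps $a_{\eps,i}(x)=A_{\eps,i}x+b_{\eps,i}$, and one must verify that for a.e.\ direction $\xi$ and a.e.\ line the sliced constants $\langle A_{\eps,i}y+b_{\eps,i},\xi\rangle$ still separate as in \eqref{eqn:separatingFrames} --- otherwise the slices do not charge $\alpha_{\rm frac}$ on $\partial^*\mathcal{P}$. This is a nontrivial step (the paper devotes a full contradiction argument with $d-1$ linearly independent base points to it), and it is also the mechanism that forces $z_\eps$ to dip to $0$ near $\partial^*\mathcal{P}$, which you assert but do not derive.

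The more serious problem is in the $\limsup$ inequality: your proposed reduction to pairs for which $\partial^*\{c=1\}$ meets $J_u$ in an $\mathcal{H}^{d-1}$-null set is impossible whenever $\mathcal{H}^{d-1}(\partial^*\{c=1\}\cap J_u)>0$. Indeed, if $c_k\to c$ in $L^1$ with null overlap, then by lower semicontinuity of perimeter
\begin{equation*}
\liminf_k \alpha_{\rm surf}\mathcal{H}^{d-1}(\partial^*\{c_k=1\}\setminus J_{u_k})=\liminf_k \alpha_{\rm surf}\mathcal{H}^{d-1}(\partial^*\{c_k=1\})\ge \alpha_{\rm surf}\mathcal{H}^{d-1}(\partial^*\{c=1\}),
\end{equation*}
which strictly exceeds the first term of $E[c,u]$, so $E[c_k,u_k]\not\to E[c,u]$ and the diagonalization fails. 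The overlap cannot be removed by approximation; it must be carried into the diffuse construction and its energy suppressed there. This is precisely what the paper's two technical lemmas accomplish: the set $A$ approximating $\{c=1\}$ retains a possibly large portion of $\partial A$ inside $M\approx J_u$, the weight $\phi_\delta(z_\eps)=\phi_\delta(0)=C_\delta\to 0$ kills the Cahn--Hilliard energy of the distance-function profile inside the tube $\{\dist(\cdot,M)<\lambda\delta\}$, and Lemma \ref{lem:jumpExclusion} shows that the level sets $\{\dist(\cdot,A)=r\}$ restricted outside the $3r$-neighborhood of $M$ carry no more measure than $\mathcal{H}^{d-1}(\partial A\setminus M)$ in the limit, so the full-weight portion charges only $\alpha_{\rm surf}\mathcal{H}^{d-1}(\partial^*\{c=1\}\setminus J_u)$. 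In particular, no modification of $c_\eps$ to a constant inside the tube (and hence no delicate matching across $\partial T_\delta$) is needed. You correctly identified the overlap as the technical heart, but the reduction you propose to dispose of it cannot work, and the construction that does work is left unproved.
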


As is typical, Theorems \ref{thm:compactness} and \ref{thm:main} show that a minimizer of $E$ can be found as the (modified) limit of minimizers $(c_\eps,u_\eps,z_\eps)$ for $E_{\eps,\delta}$.
Before turning to the proofs of the above results, we comment on some minor perturbations of their statements, which still hold.

\begin{remark}[Intermediate phase boundary energy]
It is reasonable to think that in place of the energy $E$ in \eqref{eqn:limitEnergy}, one would like to recover the energy
$$E[c,u] + \theta \alpha_{\rm surf}\mathcal{H}^{d-1}(\partial^*\{c=1\} \cap J_u),$$
for $\theta\in (0,1).$ Morally, the above energetic penalization says that energy due to phase boundaries is diminished by damage but not prevented. This energy is the $\Gamma$-limit of $E_{\eps,\delta}$ if one simply modifies the assumption on $\phi$: In addition to being continuous, non-decreasing, and satisfying \eqref{ass:liminf}, $\phi$ must satisfy $\phi(0) = \theta$ and $\phi(1) = 1$.
\end{remark}

\begin{remark}[Elastic energy]
The above theorems hold if the elastic portion of the diffuse energy $E_{\eps,\delta}$, i.e., ${\int_\Omega (z^2+\delta^2)\CC{(e(u) - ce_0)}\, dx},$ is replaced by $$\int_\Omega (\psi(z)+\eta)\CC{(e(u) - ce_0)}\, dx,$$ where $\psi:[0,1]\to [0,1]$ is a non-decreasing continuous function with $\psi(s) = 0$ only if $s=0$ and $\psi(1) = 1$, and the parameter $\eta = \eta(\delta)>0$ satisfies $\eta/\delta \to 0$ as $\delta\to 0.$
\end{remark}

\begin{remark}[Role of the damage variable]
As is common in the literature, we have restricted the damage variable to take values in the interval $[0,1]$, with $z=0$ representing complete fracture and $z=1$ representing undamaged material. Due to the elliptic nature of the approximation (and thereby access to maximum principles) restricting $z$ to take values in $[0,1]$ can almost be done for free. More generally, if one takes $V: \R\to [0,\infty) $ to be continuous, satisfy \eqref{ass:V0}, and be coercive with
\begin{equation}\nonumber 
\text{there exists $C>0$ such that $V(s) \ge  \frac{|s|}{C}$ for all $|s|\ge C$},
\end{equation}
then Theorems \ref{thm:compactness} and \ref{thm:main} still hold.
\end{remark}

\begin{remark}[Mass constraints for the concentration]\label{rmk:massConstraint}
Often times in applications, such as for the Cahn--Hilliard equation, the total concentration in $\Omega$ is fixed over time, i.e., $\dashint_\Omega c \, dx = \mu_0$. Similarly, with a logarithmic potential in place of $W$ (see, e.g., \cite{garcke-CHlogPot,stinsonLiBatteryExpBC}), the concentration is constrained to take values between $[0,1]$. Given $\mu_0\in [0,1]$, Theorem \ref{thm:compactness} and \ref{thm:main} hold under the additional condition that $E_{\eps,\delta}(c_\eps,u_\eps,z_\eps) = +\infty$ and $E[c,u] =+\infty$ if the mass constraint fails, i.e., if $\dashint_\Omega c_\eps \, dx \neq  \mu_0$ or $\dashint_\Omega c \, dx \neq  \mu_0$, or the $L^\infty$-constraint fails, i.e., $c_\eps$ is not in $[0,1]$ almost everywhere. The $\liminf$ inequality for these modified energies follows from the existing proof, since we keep track of the convergence of $c_\eps$ and $u_\eps$ in all of $\Omega$. The constraint that $c_\eps$ take values in $[0,1]$ is only so that the condition $\dashint_\Omega c_\eps \, dx = \mu_0$ is stable as $c_\eps$ converges in measure to $c$; as such, any equi-integrability of the sequence $c_\eps$ will suffice. The proof of the $\limsup$ inequality follows from the one herein and modified as in \cite[Theorem 1.2]{stinsonLiBatteryGamma}, with the $L^\infty$-constraint on $c_\eps$ only used to ensure that convergence in measure then implies $\dashint_\Omega c_\eps \, dx \to \dashint_\Omega c \, dx.$
\end{remark}

 We now turn to proving the compactness Theorem \ref{thm:compactness}. The proof of the $\Gamma$-limit in Theorem \ref{thm:main} is split into Section~\ref{sec:liminf} for the $\liminf$ inequality and Section~\ref{sec:limsup} for the $\limsup$ inequality.

\section{Compactness}\label{sec:compactness}

We first note the well-known trick connecting geodesic distances to the Cahn--Hilliard energy.
We use $f$ as a placeholder for $W$ or $V$ and define $d_f: \R \to \R$ 
by $$d_f(t):=2\int_0^t \sqrt{\min\{f(s), M\}} \, ds$$ where $M:=\|f\|_{L^\infty([0,1])}$.
Then $d_f $ is Lipschitz continuous with derivative $ d_f'(t)=2\sqrt{\min\{f(t), M\}}$ and, as it is strictly increasing, has continuous inverse $d_f^{-1}:\R \to \R$.
For $w \in H^1(\Omega)$ and $\eps >0$, Young's inequality implies for any measurable $A\subset \Omega$ that
\begin{equation}\label{eqn:MMtrick}
    |D [d_f\circ w]|(A)
    = \int_A| 2\sqrt{\min\{f(w), M\}}\nabla w| \, dx
    \le \int_A \left(\frac{1}{\eps }f(w) + \eps| \nabla w|^2\right)  dx
\end{equation}

\begin{proof}[Proof of Theorem \ref{thm:compactness}]
The strategy of the proof is as follows: Convergence of $z_\eps$ will follow from existing results. We roughly take the sets $A_\eps \approx \{|c_\eps|< 3\}$ and $B_\eps \approx \{|z_\eps|>1/2\}$ such that both of these sets have finite perimeter controlled by $C_0.$ By virtue of $W$ and $V$, $A_\eps \cap B_\eps \to \Omega$. Then it is possible to show that $c_\eps\chi_{A_\eps \cap B_\eps}$ and $u_\eps\chi_{A_\eps \cap B_\eps} $ converge in measure as desired by applying $BV$ and $GSBD^2$ compactness, respectively. We note that whenever necessary we take a subsequence in $\eps$ without comment.

\emph{Step 1 (Convergence of $z_\eps$).} Since $\int_\Omega \left(\frac{1}{\delta}V(z_\eps)+ \delta |\nabla z_\eps|^2\right) dx
\le C_0$, the standard compactness result for the Cahn--Hilliard (or Modica--Mortola) functional \cite{ModicaMortola,Modica87} yields that
 $z_\eps \to 1$ in $L^1(\Omega)$.

\emph{Step 2 (Selection of sub/super level-sets).}
In this step, we find large sets with finite perimeter inside which the functions $c_{\eps}$ and $z_{\eps}$ are bounded. Each set is found by a variation of the same argument.

Applying \eqref{eqn:MMtrick} with $\eps$ replaced by $\delta$ {and $f$ by $V$}, we have 
\begin{align}\nonumber
 |D [d_{ V}\circ z_{\eps}]|(\Omega)\leq  C_0. 
\end{align}
Thus from the coarea formula \cite{AmbrosioFuscoPallara} we have
\begin{align}\nonumber
    \dashint_{d_{ V}(\frac{1}{4})}^{d_{ V}(\frac{3}{4})} \mathcal{H}^{d-1}(\partial^*\{d_{ V} \circ z_{\eps} > t\})\, dt
    \le \frac{C_0}{d_{ V}(\frac{3}{4})-d_{ V}(\frac{1}{4})},
\end{align}
and  therefore  for each $\eps>0$ we can find some
 $\tilde t_{\eps}\in (d_{V}(\tfrac{1}{4}),d_{ V}(\tfrac{3}{4}))$ such that 
$\mathcal{H}^{d-1}(\partial^*\{d_{ V} \circ z_{\eps} > \tilde t_{\eps}\}) \le C$ (where $C>0$ does not depend on $\eps$).
Consequently, defining $t_{\eps} := d_{V}^{-1}(\tilde t_{\eps}) $ and $$B_\eps := \{z_{\eps} > t_{\eps}\} =\{d_{V} \circ z_{\eps} > \tilde t_{\eps}\} ,$$ we have $\mathcal{H}^{d-1}(\partial^*B_\eps) \le C$
and by monotonicity of $d_{ V}^{-1}$ it holds that $t_\eps \in (\frac{1}{4}, \frac{3}{4}) $.
Since $z_{\eps}\to 1$ in $L^1(\Omega)$ by Step 1, dominated convergence implies $\chi_{B_\eps}\to \chi_{\Omega} $ in $L^1(\Omega)$.

We continue in a similar fashion by
finding subsets $A_\eps \subset B_\eps \subset \Omega$ with finite perimeter
 inside which $c_{\eps}$ is bounded.
Applying \eqref{eqn:MMtrick} and using the monotonicity of $\phi_{\delta}$, we compute 
\begin{equation}\nonumber
    \limsup_{\eps \to 0}|D [d_{W}\circ c_{\eps}]|  ( B_\eps)
    \le \limsup_{\eps \to 0} \frac{1}{\phi_{\delta} (\frac{1}{4})} \int_{B_\eps} \phi_{\delta} (z_{\eps}) \left(\frac{1}{\eps}W(c_{\eps})+\eps|\nabla c_{\eps}|^2\right) dx
    \le  \frac{C_0}{\phi (\frac{1}{4})} <\infty.
\end{equation}
Thus we may assume $|D [d_{W}\circ c_{\eps}]|  ( B_\eps) \le C$  for all $\eps$ for some $C>0$.
Arguing as before
   \begin{align}
    \dashint_{d_{W}(2)}^{d_{W}(3)} \mathcal{H}^{d-1}(\partial^*\{d_{W} \circ c_{\eps} > t\}\cap B_\eps)\, dt
    \le   \frac{C}{d_{W}(3)-d_{W}(2)},
\end{align}
so there must exist some  $\tilde s_{\eps} \in (d_{W}(2),d_{W}(3))$ such that 
$ \mathcal{H}^{d-1}(\partial^*\{d_{W} \circ c_{\eps} > 
\tilde s_{\eps} \}\cap B_\eps) \le C$.
Setting $s_{\eps} := d_{W}^{-1}(\tilde s_{\eps}) \in (2,3) $ 
we have $ \mathcal{H}^{d-1}(\partial^*\{ c_{\eps} > s_{\eps} \}\cap  B_\eps) \le C$.
In the same way we can also find a lower bound $s_\eps^{-} \in (-3,-2)$ such that $ \mathcal{H}^{d-1}(\partial^*\{ c_{\eps} {>}  s_\eps^{-} \}\cap B_\eps) \le  C$ for some new constant $C>0$.
Defining
$$A_\eps := \{s_\eps^{-} <c_{\eps} {\leq} s_\eps\} \cap B_\eps ,$$ we clearly have $\mathcal{H}^{d-1}(\partial^*A_\eps) \le C$ and $\{ |c_\eps| \le 2\} \cap B_\eps  \subset A_\eps \subset \{ |c_\eps| \le 3\}$.
From this last relation and the properties of $W$ in \eqref{ass:W0} and \eqref{ass:W1}, we find that
$$\mathcal{L}^{d}(\{|c_\eps|>2\}\cap B_\eps) \leq \frac{1}{C\phi_\delta(1/4)}\int_{\Omega} \phi_\delta(z_\eps)W(c_\eps) \, dx\to 0 $$
for some constant $C = C(W)>0$, so that $\chi_{A_\eps} \to \chi_{\Omega}$ in $L^1(\Omega)$.

\emph{Step 3 (Convergence of $c_\eps$).}
It immediately follows that $(d_{W}\circ  c_\eps)\chi_{A_\eps}$ is bounded in $L^1(\Omega)$, and further from the product rule for $BV$ functions,
$$|D[(d_{W}\circ  c_\eps)\chi_{A_\eps}]|(\Omega)\leq C + {6}\sqrt{M} \mathcal{H}^{d-1}(\partial^*A_\eps) ,$$
where we have picked up the factor ${6}\sqrt{M}$ as this is the maximum possible jump between $0$ and $d_{W}\circ  c_\eps$ on the boundary of $A_\eps.$  
By $BV$-compactness \cite{AmbrosioFuscoPallara}, we have that $(d_{W}\circ c_\eps)\chi_{A_\eps} \to w \in BV(\Omega)$ in $L^1(\Omega),$ and defining $c : = d_W^{-1}\circ w$, we have $ c_\eps \chi_{A_\eps}\to c$ in $L^1(\Omega)$. 
Since $\chi_{A_\eps} \to \chi_\Omega$, we recover that $c_\eps$ converges in measure to $c$. Given the singular perturbation of $W$ and that $\chi_{B_\eps}\to \chi_\Omega$, it is direct to conclude that $c$ belongs to $BV(\Omega;\{0,1\}).$

\emph{Step 4 (Convergence of $u_\eps$).}
We prove that the functions $\bar u_\eps : = u_\eps \chi_{A_\eps}$ converge as claimed in the theorem statement, from which the convergence for $u_\eps$ will follow as $\chi_{A_\eps} \to \chi_\Omega$ in $L^1(\Omega)$. Note that using the coercivity of $\C$, we have
\begin{align*}
&\int_{\Omega} |e(\bar u_\eps)|^2\, dx + \alpha_{\rm frac}\mathcal{H}^{d-1}(J_{\bar u_\eps}) \\
&\leq 2\int_{B_\eps} |e( u_\eps) -c_\eps e_0|^2 \, dx + 2\int_{A_\eps} |c_\eps e_0|^2 \, dx + \alpha_{\rm frac}\mathcal{H}^{d-1}(\partial^*A_\eps) \leq C(C_0) + 18|e_0|^2\mathcal{L}^d(\Omega) + C,
\end{align*}
and
we may directly apply the compactness result \cite[Theorem 1.1]{chambolleCrismaleEquilibrium} to conclude the proof.
\end{proof}

\section{The $\liminf$ inequality}\label{sec:liminf}
We now turn to the $\liminf$ inequality of Theorem \ref{thm:main}. The argument revolves around slicing, reducing much of the effort to verification of the $\liminf$ inequality in $1$D. In $1$D, since the symmetrized gradient is simply the gradient in $1$D, the main challenge is to pick up the correct {surface} energy coming from the jumps of $c$ and $u$ under the convergence of modified sequences found via Theorem \ref{thm:compactness}; here, we build off of a strategy used in \cite{ambrosio-tortorelli-1990} (see also Braides' book \cite[Chapter 4]{braidesFreeDiscont}).
At the end, a little work is required to show that slicing does not lose too much energy and can fully recover the energy of the symmetric gradient when $d>1$; for this, we extend the approach used by Iurlano in \cite{iurlano_density} (see also \cite{chambolleCrismale_approx}) to the heterogeneous $GSBD$ setting.

\begin{proof}[Proof of the $\liminf$ inequality of Theorem \ref{thm:main} in $1$D] 
We begin by noting that when $d=1$, $GSBD^2(\Omega) = GSBV^2(\Omega;\R)$ and the affine modifications of $u_\eps$ are given by piecewise constant shifts, so that
\begin{equation}\label{eqn:measConverge1D}
\left(u_\eps - \sum_i a_{\eps,i}\chi_{P_i}\right)  \to u \in GSBV^2(\Omega;\R) \quad  \text{ in measure},
\end{equation}
with $a_{\eps,i} \in \R$ and 
\begin{equation}\label{eqn:sepFrame2}
|a_{\eps,i} - a_{\eps,j}|\to \infty
\end{equation} for all $i\neq j$. Naturally, in the energy \eqref{eqn:energyEps}, we may assume that $\CC{(e(u)-ce_0)}$ is replaced by $|u' - ce_0|^2$ with $e_0\in \R.$ Without loss of generality, we assume 
\begin{equation}\nonumber
\lim_{\eps \to 0}E_{\eps,\delta}(c_\eps,u_\eps,z_\eps) = C_0 <\infty,
\end{equation}
allowing us to freely take subsequences and not worry if we have changed the limiting energy---which we typically do as necessary without further comment. For instance, we may suppose $(c_\eps,u_\eps,z_\eps)$ converge pointwise almost everywhere.

We proceed by bounding each term of the energy individually. Note that each function in $(c_\eps,u_\eps,z_\eps)$ is continuous since $H^1(\Omega) \hookrightarrow C^{1/2}(\Omega).$ 

\emph{Step 1 (Lower bound for the crack energy).}
We measure the maximum damage in a region $I\subset \Omega$ with the variable 
\begin{equation}\nonumber
m(I):= \liminf_{\eps\to 0}\left(\inf_{x\in I}z_\eps(x)\right).
\end{equation}
First, we note by the continuity and monotonicity of $\phi_\delta = \phi + o_{\delta \to 0}(1)$ that
\begin{equation}\label{eqn:lowBddPhi}
\liminf_{\eps \to 0}\left(\inf_{x\in I}\phi_{\delta}(z_\eps(x))\right)\ge \phi(m(I)).
\end{equation}

The lower bound on $\phi_\delta(z_\eps)$ allows us to show that if $I\subset \Omega$ is an interval with $m(I)>0$, then there exists some constant $s_{m(I)}>0$ such that 
\begin{equation}\label{eqn:cLinftybdd}
\|c_\eps\|_{L^\infty(I)}\le s_{m(I)}
\end{equation}
for all $\eps$ (up to a subsequence). Precisely, given the pointwise convergence of $c_\eps$, we can find $x_0 \in I$ with $\lim_{\eps \to 0}c_\eps(x_0) = 0$ or $1$. We apply \eqref{eqn:MMtrick} and the fundamental theorem of calculus to find that for any point $x\in I$, we have
\begin{equation}\nonumber
|d_W(c_\eps(x)) - d_W(c_\eps(x_0))| \leq \frac{1}{\inf_{x\in I}\phi_\delta(z_\eps(x))}\int_{I}\phi_{\delta}(z_\eps) \left( \frac{1}{\eps} W(c_\eps) + \eps | {c_\eps'}|^2\right) dx.
\end{equation}
Consequently, we find that
$$\limsup_{\eps\to 0}\left(\sup_{x\in I}|d_W(c_\eps(x))|\right) \leq \frac{C_0}{\phi(m(I))} + d_W(1),$$
which proves \eqref{eqn:cLinftybdd} (up to a subsequence) as $d_W(s)\to \pm \infty$ when $s\to \pm \infty$.

With this, we claim that if $I\subset \Omega$ is an interval with $m(I)>0$, then $I\cap ( J_u \cup \partial^*P) = \emptyset$. To see this, note the $H^1(I)$ bound on $u$ following from
\begin{equation}\label{eqn:gradUbound}
\limsup_{\eps \to 0}\int_I |u_\eps'|^2\, dx \leq \frac{2}{m(I)^2}\limsup_{\eps \to 0}\int_I (z_\eps^2+\delta^2)|u_\eps'-ce_0|^2\, dx + 2(|e_0|s_{m(I)})^2\mathcal{L}^1(I) \leq C.
\end{equation} 
Then $u_\eps - \dashint u_\eps \weakly \tilde u \in H^1(\Omega)$. Necessarily, \eqref{eqn:sepFrame2} requires that $I\cap \partial^*\mathcal{P} = \emptyset$, and subsequently, it becomes clear that also $I\cap J_u = \emptyset,$ as desired

Using the converse of the claim proven in the previous paragraph, we have for any open interval $I\subset \Omega$ with  $I\cap ( J_u \cup \partial^*\mathcal{P}) \neq \emptyset,$ then $m(I) = 0.$ In fact, as the same reasoning holds for any subsequence of $\eps\to 0$, we improve this to $\limsup_{\eps\to 0}\left(\inf_{x\in I}z_\eps(x)\right) = 0. $
Now, we take nested intervals $I\subset\subset I'$ with $I \cap ( J_u \cup \partial^*\mathcal{P})\neq \emptyset$. From the prior reasoning, there is $x_{\eps}\in I$ with $z_\eps(x_{\eps})\to 0$. By the convergence of $z_\eps\to 1$, there are also $x_{\eps}^\pm\in I'\setminus I$ with  $x_{\eps}^- < x_{\eps} < x_{\eps}^+$ such that $z_\eps(x_{\eps}^\pm)\to 1.$ Arguing using \eqref{eqn:MMtrick} and the fundamental theorem of calculus, we have that
\begin{equation}\label{eqn:gammaCharge}
\begin{aligned}
\alpha_{\rm frac} = 2d_V(1) \leq & \liminf_{\eps\to 0} \left(\int_{x_{\eps}^-}^{x_{\eps}} \left(\frac{V(z_\eps)}{\delta} + \delta|{z_\eps'}|^2\right) dx + \int_{x_{\eps}}^{x_{\eps}^+} \left(\frac{V(z_\eps)}{\delta} + \delta|{z_\eps'}|^2\right) dx \right) \\
\leq & \liminf_{\eps\to 0} \int_{I'} \left(\frac{V(z_\eps)}{\delta} + \delta|{z_\eps'}|^2\right) dx.
\end{aligned}
\end{equation}

\emph{Step 2 (Lower bound for the elastic energy).}
To bound the elastic energy from below, the idea is to cut out regions where the damage variable drops below a threshold $\gamma$. 
We show that 
\begin{equation}\label{eqn:1DelasticGlobal}
\int_{\Omega}|u'-ce_0|^2\, dx\leq \liminf_{\eps\to 0} \int_{\Omega}(z_\eps^2 + \delta^2)|u'_\eps-c_\eps e_0|^2\, dx. 
\end{equation}
As a technical aid, we consider an interval $\Omega'\subset\subset \Omega$ and, without loss of generality, suppose $\Omega' = (0,1).$
We now divide up the interval $(0,1)$ into $N\in \N$ chunks of equal length $I_{k,N} := (\frac{k}{N},\frac{k+1}{N}]$ where $k = 0,\ldots,N-1$ (with $I_{N-1,N} := (\frac{N-1}{N},1)$). We assume (up to a subsequence in $\eps$, not relabeled) that 
\begin{equation}\label{eqn:additiveAss}
m(I_{k,N}) = \lim_{\eps\to 0}\left(\inf_{x\in I_{k,N}}z_\eps(x)\right)
\end{equation} for all $k$ and $N.$ Note that if $m(I_{k,N}) \leq \gamma$, reasoning as in \eqref{eqn:gammaCharge}, we have that
$$ d_V(1) - d_V(\gamma) \leq \liminf_{\eps \to 0}\int_{I_{k,N}} \left(\frac{V(z_\eps)}{\delta} + \delta|{z_\eps'}|^2\right) dx.$$
Noting that the above display can be added across intervals due to \eqref{eqn:additiveAss} and denoting the bad indices $k$ in $\{0,\ldots,N-1\}$ for which $m(I_{k,N}) \leq \gamma$ by $\mathbb{B}_{\gamma,N}$, we have that
$$\#(\mathbb{B}_{\gamma,N}) \leq \frac{C_0}{d_V(1) - d_V(\gamma)}. $$
Critically, the above bound on indices is independent of $N$.
As the endpoints of the finitely many bad intervals converge up to a subsequence in $N$ and their width vanishes as $N\to \infty$, we can assume that given any $\eta>0$ we have $$\cup_{k\in \mathbb{B}_{\gamma,N}}I_{k,N} \subset J_\gamma + (-\eta,\eta)$$ for all sufficiently large $N$, where $J_\gamma\subset \overline{\Omega'}\subset \Omega$ (this is the only point where we use $\Omega'\subset\subset \Omega$) and $\mathcal{H}^0(J_\gamma)\leq \frac{C_0}{d_V(1) - d_V(\gamma)}.$ (Here we have abused our jump-set notation, but one should think of $J_\gamma$ as the set where $z$ jumps down to $\gamma$.) It follows that for any $\eta>0$,
\begin{equation}\label{eqn:mBelow}
m(\Omega' \setminus (J_\gamma + (-\eta,\eta))) >\gamma.
\end{equation}

Finally, we tie this information together. First, note that $c_\eps \to c$ in $L^2(\Omega' \setminus (J_\gamma + (-\eta,\eta)))$ because one can take advantage of the bound \eqref{eqn:cLinftybdd}. Consequently, we can use the weak convergence of the gradient $u'_\eps \weakly u'$ in $L^2(\Omega' \setminus (J_\gamma + (-\eta,\eta)))$ (a consequence of the reasoning in \eqref{eqn:gradUbound}) along with \eqref{eqn:mBelow} to estimate
\begin{equation}\nonumber
\gamma^2\int_{\Omega' \setminus (J_\gamma + (-\eta,\eta))} |u' -ce_0|^2\, dx \leq  \liminf_{\eps\to 0}\int_{\Omega' \setminus (J_\gamma + (-\eta,\eta))} (z_\eps^2+\delta^2)|u'_\eps -c_\eps e_0|^2\, dx .
\end{equation} 
Taking $\eta \downarrow 0$ and subsequently $\gamma \uparrow 1$, we have
\begin{equation}\nonumber
\int_{\Omega'}|u'-ce_0|^2\, dx\leq  \liminf_{\eps\to 0} \int_{\Omega}(z_\eps^2 + \delta^2)|u'_\eps-c_\eps e_0|^2\, dx. 
\end{equation}
As the above reasoning (and thereby the above inequality) holds for any finite union of intervals given by $\Omega' \subset\subset \Omega$, we obtain \eqref{eqn:1DelasticGlobal}.

\emph{Step 3 (Lower bound for the {interfacial} energy).}
For an open interval $I'\subset \Omega$ with $I'\cap \partial^*\{c=1\}\neq \emptyset$, we prove that
\begin{equation}\label{eqn:phaseCharge}
\alpha_{\rm surf} \leq \liminf_{\eps \to 0}\left[\int_{I'} \phi_\delta(z_\eps)\left(\frac{W(c_\eps)}{\eps} + \eps|{c_\eps'}|^2\right) dx + \int_{I'} \left(\frac{V(z_\eps)}{\delta} + \delta|{z_\eps'}|^2\right) dx \right].
\end{equation}

We argue analogous to Step 1. Take $I\subset \subset I'$ with {$\mathcal{H}^{0}(I\cap \partial^*\{c=1\})=1$ and $\mathcal{H}^{0}(I'\cap \partial^*\{c=1\})=1$}. Up to a subsequence, we can assume that $m(I) = \lim_{\eps \to 0} \inf_{x\in I}z_\eps(x)$ and then choose $x_\epsilon$ such that $z_\eps(x_\eps) \to m(I)$. From the convergence of $z_\eps$ and $c_\eps$, we can find $x_{\eps}^\pm\in I'\setminus I$ with  $x_{\eps}^- < x_{\eps} < x_{\eps}^+$ such that $z_\eps(x_{\eps}^\pm)\to 1$ and (without loss of generality) $c_\eps(x_{\eps}^\pm)\to 1/2\pm 1/2$.
As in \eqref{eqn:gammaCharge}, we have 
\begin{equation}\label{eqn:cFracCharge}
4 \int_{m(I)}^1 \sqrt{V(s)}\, ds= 2(d_V(1)-d_V(m(I))) \leq \liminf_{\eps\to 0} \int_{x_{\eps}^-}^{x_{\eps}^+} \left(\frac{V(z_\eps)}{\delta} + \delta|{z_\eps'}|^2\right) dx  .
\end{equation}
Likewise, using \eqref{eqn:MMtrick} and \eqref{eqn:lowBddPhi}, we have
\begin{equation}\label{eqn:partPhaseCharge}
\phi(m(I))\alpha_{\rm surf} = \phi(m(I))d_W(1) \leq \liminf_{\eps \to 0}\int_{x_{\eps}^-}^{x_{\eps}^+}\phi_\delta(z_\eps)\left(\frac{W(c_\eps)}{\eps} + \eps|{c_\eps'}|^2\right) dx.
\end{equation}
Summing \eqref{eqn:cFracCharge} and \eqref{eqn:partPhaseCharge} together, and using the relation \eqref{ass:liminf}, we recover \eqref{eqn:phaseCharge}.

\emph{Step 4 (Lower bound for the {total surface} energy).}
We take nested intervals $I_k\subset I_k'$ with $I_k'$ disjoint, so that each interval satisfies $\mathcal{H}^0(I_k \cap ( \partial^*\{c=1\} \cup J_u \cup \partial^*\mathcal{P})) = 1$, and $\partial^*\{c=1\} \cup J_u \cup \partial^*\mathcal{P}\subset  \cup_k I_k$. Applying either \eqref{eqn:gammaCharge} or \eqref{eqn:phaseCharge}, we can sum over all of the intervals $I_k'$ to find
\begin{equation}\label{eqn:lowerBoundSurface}
\begin{aligned}
& \alpha_{\rm surf} \mathcal{H}^0(\partial^*\{c=1\} \setminus (J_u \cup \partial^*\mathcal{P})) + \alpha_{\rm frac}\mathcal{H}^0(J_u \cup \partial^*\mathcal{P})  \\
& \qquad \leq \liminf_{\eps \to 0} \left[\int_{\Omega}\phi_\delta(z_\eps)\left(\frac{W(c_\eps)}{\eps} + \eps|{c_\eps'}|^2\right) dx + \int_\Omega \left(\frac{V(z_\eps)}{\delta} + \delta|{z_\eps'}|^2\right)dx \right]. 
\end{aligned}
\end{equation}

\emph{Step 5 (Conclusion).} Putting together \eqref{eqn:1DelasticGlobal} and \eqref{eqn:lowerBoundSurface} concludes the proof of \eqref{eqn:liminfInequality} when ${d=1}$.
\end{proof}

As it will be needed below, we remark that Step 2 of the proof above actually shows that \eqref{eqn:1DelasticGlobal} can be improved to 
\begin{equation}\label{eqn:1DelasticGlobalModified}
\int_{\Omega}|u'-ce_0 -w|^2\, dx\leq \liminf_{\eps\to 0} \int_{\Omega}(z_\eps^2 + \delta^2)|u'_\eps-c_\eps e_0 -w|^2\, dx. 
\end{equation}
for any $w\in L^2(\Omega).$

\begin{remark}[Slicing]\label{rmk:slicing}
For the proof of the $\liminf$ inequality in arbitrary dimension, we introduce the following notation for slicing. The $(d-1)$-dimensional plane $\Pi_{\xi}$ passes through the origin and is orthogonal to $\xi,$ i.e.,
$$\Pi_\xi := \{y\in \R^d: \langle y,\xi \rangle = 0 \}. $$
Given a direction $\xi\in \mathbb{S}^{d-1}$ and a point $y\in \Pi_\xi$, we can then slice $\Omega$ in parallel lines with 
$$\Omega_{\xi,y}:= \{t\in \R : y+t\xi \in \Omega\}. $$
For $u:\Omega \to \R^d$, on each of these slices we define the functions $u_{\xi ,y}:\Omega_{\xi,y} \to \R$ by
$$u_{\xi,y}(t):= \langle u(y+t\xi) ,\xi \rangle.$$
We overload the notation and for a scalar valued function $c: \Omega \to \R $, we define
$$c_{\xi,y}(t):=  c(y+t\xi),$$
and similarly for a partition $\mathcal{P} = \{P_i\}_{i\in \N}$ of $\Omega$, we denote by $\mathcal{P}_{\xi,y} : = \{(P_i)_{\xi,y}\}_{i\in \N}$ the one dimensional restriction of the partition to $\Omega_{\xi,y}$ with
$$(P_i)_{\xi,y} :=\{t\in \R : y+t\xi \in \Omega \cap P_i\}.  $$

For $u\in GSBD^2(\Omega)$, $c\in BV(\Omega;\{0,1\})$, and a Caccioppoli partition $\mathcal{P} = \{P_i\}_{i\in \N}$, for almost every $\xi \in \mathbb{S}^{d-1}$ and for almost every $y\in \Pi_\xi$, we have that $u_{\xi,y} \in GSBV^2(\Omega_{\xi,y})$, $c_{\xi,y}\in BV(\Omega_{\xi,y};\{0,1\})$, and that $\mathcal{P}_{\xi,y}$ is a Caccioppoli partition of $\Omega_{\xi,y}$ with
\begin{equation}\label{eqn:jumpSliceRel}
\begin{aligned}
&  J_{u_{\xi,y}} = \{t\in \Omega_{\xi,y}: y+t\xi \in J_u\}, \\
 \partial^*\{c_{\xi,y} = 1\} = \{t\in \Omega_{\xi,y}: y+t\xi \in & \partial^*\{c=1\}\}, \quad  \text{ and } \quad \partial^*\mathcal{P}_{\xi,y} = \{t\in \Omega_{\xi,y}: y+t\xi \in \partial^*\mathcal{P}\}.
\end{aligned}
\end{equation}
Formally, the above relations say that $J_{u_{\xi,y}}$ is given by $J_u\cap \Omega_{\xi,y}$.
The symmetrized gradient is connected to the slice's gradient via the relation
\begin{equation}\label{eqn:derivativeSlice}
\tfrac{d}{dt} u_{\xi,y} (t) = \langle e(u)(y+t\xi)\xi,\xi\rangle,
\end{equation}
which highlights why the inner product with $\xi$ was necessary in the definition of $u_{\xi,y}.$
 For further information on slicing, we refer to \cite{AmbrosioFuscoPallara} for $BV$ functions and \cite{dalMasoGSBD} for $GSBD$ functions
\end{remark}

We now turn to the proof of the $\liminf$ inequality in arbitrary dimension. With the $1$D case out of the way, the details follow essentially as done by Iurlano \cite[Theorem 4.3]{iurlano_density} for the approximation of the Griffith energy. However, there is a meaningful difference in verifying the convergence of the sliced sequences due to the skew-affine shifts.

\begin{proof}[Proof of the $\liminf$ inequality of Theorem \ref{thm:main} in any dimension] 
Without loss of generality, we assume 
\begin{equation}\nonumber
\lim_{\eps \to 0}E_{\eps,\delta}(c_\eps,u_\eps,z_\eps) = C_0 <\infty,
\end{equation}
and thus, may take subsequences without disturbing the energy.
We use the slicing notation introduced in Remark \ref{rmk:slicing} without further reference.

\emph{Step 1 (Convergence of the sliced functions).}
We take the collection of infinitesimal rigid motions $\{a_{\eps,i}\}_{i,\eps}$ and the Caccioppoli partition $\mathcal{P}$ as in Theorem \ref{thm:compactness}.
We need to understand how the convergence in measure is carried by slices. Of course, the convergence of $(c_\eps)_{\xi,y}$ and $(z_\eps)_{\xi,y}$ to $c_{\xi,y}$ and $z_{\xi,y}$ follows from a direct application of Fubini's theorem. For the displacements $u_\eps$ satisfying \eqref{eqn:displacementConvergence},
we verify that the convergence \eqref{eqn:measConverge1D} holds for the almost every instantiation of $(u_\eps)_{\xi,y}$  with the piecewise constant shift given by \begin{equation}\label{eqn:pwConstShiftSlice}
\sum_{i} (a_{\eps,i})_{\xi,y}\chi_{(P_i)_{\xi,y}}. 
\end{equation} Further, we show the piecewise constant shift associated with the partition $\mathcal{P}_{\xi,y}$ satisfies \eqref{eqn:sepFrame2}.

First, for all $t\in \Omega_{\xi,y}$, we have that
\begin{equation}\label{eqn:pwConsts}
 (a_{\eps,i})_{\xi,y}(t) = \langle A_{\eps,i}(y+t\xi) + b_{\eps,i}, \xi \rangle = \langle A_{\eps,i}(y) + b_{\eps,i}, \xi \rangle,
\end{equation}  as $A\xi \in \Pi_\xi$ for any skew-affine matrix $A$, so that \eqref{eqn:pwConstShiftSlice} is infact a piecewise constant function on $\Omega_{\xi,y}$.
The convergence \eqref{eqn:measConverge1D} holds by Fubini's as before since
\begin{equation}\label{eqn:sliceU}
\Big(u_\eps - \sum_{i} a_{\eps,i}\chi_{P_i}\Big)_{\xi,y} = (u_\eps)_{\xi,y} -  \sum_{i} (a_{\eps,i})_{\xi,y}\chi_{(P_i)_{\xi,y}}.
\end{equation}

The rest of this step is dedicated to showing that \eqref{eqn:sepFrame2} is satisfied for the grotesque piecewise constant function in \eqref{eqn:pwConstShiftSlice} for almost every $\xi\in \mathbb{S}^{d-1}$ and almost every $y\in \Pi_\xi$. 
 From \eqref{eqn:separatingFrames}, for a pair $(i,j)$ with $i\neq j$, we see that if 
\begin{equation}\label{eqn:bBounded}
\limsup_{\eps\to 0} |b_{\eps,i} - b_{\eps,j}|<\infty,
\end{equation} 
then necessarily ${\lim_{\eps\to 0}} |A_{\eps,i} - A_{\eps,j}| = \infty.$ From this last condition, {by taking a subsequence in $\eps$,} one can show that for almost every $\xi\in \mathbb{S}^{d-1}$ it holds that
\begin{equation}\label{eqn:xiFurther}
{\lim_{\eps\to 0} } |(A_{\eps,i} - A_{\eps,j})(\xi)| = \infty.
\end{equation} 
Precisely, denote the $l$-th row of $A_{\eps,i} - A_{\eps,j}$ by $(A_{\eps,i} - A_{\eps,j})_l$; then since ${\lim_{\eps\to 0}} |A_{\eps,i} - A_{\eps,j}| = \infty$, there is some row $l_0$ such that $|(A_{\eps,i} - A_{\eps,j})_{l_0}|\to \infty$ and $\frac{(A_{\eps,i} - A_{\eps,j})_{l_0}}{|(A_{\eps,i} - A_{\eps,j})_{l_0}|}\to a_{(i,j)}\in \mathbb{S}^{d-1}$ up to a subsequence; with this, \eqref{eqn:xiFurther} holds for $\xi \not \in \Pi_{a_{(i,j)}}$.
 Consequently, we restrict $\xi$ so that if \eqref{eqn:bBounded} holds, then \eqref{eqn:xiFurther} also holds. 
Similarly, for any pair $(i,j)$ such that 
\begin{equation}\label{eqn:ijlimits}
\limsup_{\eps\to 0} |b_{\eps,i} - b_{\eps,j}| = \infty,
\end{equation} 
up to taking a subsequence in $\eps$, we have for almost every $\xi\in \mathbb{S}^{d-1}$ that
\begin{equation}\label{eqn:xiRestrict}
{\lim_{\eps\to 0} } |\langle b_{\eps,i} - b_{\eps,j},\xi\rangle| = \infty.
\end{equation} 
Precisely, one may take subsequences so that $\lim_{\eps\to 0} |b_{\eps,i} - b_{\eps,j}| = \infty$ and
we define $\frac{b_{\eps,i} - b_{\eps,j}}{|b_{\eps,i} - b_{\eps,j}|} \to {b_{(i,j)}}\in \mathbb{S}^{d-1};$ with this, \eqref{eqn:xiRestrict} holds for $\xi \not \in \Pi_{b_{(i,j)}}$.
In summary, we may take a subsequence in $\eps$ so that, for almost every $\xi \in \mathbb{S}^{d-1}$, if \eqref{eqn:bBounded} holds then so does \eqref{eqn:xiFurther} and, likewise, if \eqref{eqn:ijlimits} holds then so does \eqref{eqn:xiRestrict}.

We \textbf{claim} that \eqref{eqn:sepFrame2} with constants as in \eqref{eqn:pwConsts} holds for each pair $(i,j)$ on $\Pi_\xi$ for any $\xi$ as in the preceding paragraph, up to a subsequence in $\eps$ (depending on $\xi$). 
{Supposing \eqref{eqn:sepFrame2} does not hold for some pair $(i,j)$,} we inductively construct subsequences: one of these subsequences will satisfy \eqref{eqn:sepFrame2} for $(i,j)$, or we will manage to find a collection of vectors $\{y_k\}_{k=0}^{d-1}\subset \Pi_\xi$ such that $\{y_k -y_0\}_{k=1}^{d-1}$ forms a linearly independent basis of $\Pi_\xi$ and
\begin{equation}\label{eqn:falseCondition}
\limsup_{\eps\to 0} |(a_{\eps,i})_{\xi,y_k} -(a_{\eps,j})_{\xi,y_k}|<\infty \quad \text{ for each } k = 0,\ldots, d-1,
\end{equation}
which will later be used to find a contradiction.
{To start with, since \eqref{eqn:sepFrame2}} does not hold on a set $Y \subset \Pi_\xi$ with $\mathcal{H}^{d-1}(Y)>0$ for the pair $(i,j)$, we may find $y_0\in Y$ satisfying the inequality in \eqref{eqn:falseCondition} by taking a subsequence in $\eps$. For this new subsequence, either \eqref{eqn:sepFrame2} holds for the pair $(i,j)$ $\mathcal{H}^{d-1}$-almost everywhere on $\Pi_\xi$ or there is a set $Y_0 \subset \Pi_\xi$ with $\mathcal{H}^{d-1}(Y_0)>0$ on which it fails. In the former case, we are done and may proceed to the next step. In the latter case, we take a further subsequence in $\eps$ to find $y_1\in Y_0$ satisfying the inequality in \eqref{eqn:falseCondition} with $\{y_1-y_0\}$ a linearly independent basis. Continuing in this manner, one either finds that \eqref{eqn:sepFrame2} holds for $(i,j)$ up to taking a subsequence in $\eps$ or constructs the desired vectors $\{y_k\}_{k=0}^{d-1}$  satisfying \eqref{eqn:falseCondition}.

 We now use the constructed vectors to derive a contradiction. Recalling \eqref{eqn:pwConsts}, we may subtract the condition \eqref{eqn:falseCondition} for different $y_k$ and $y_{0}$ to find
$$\limsup_{\eps \to 0}|\langle (A_{\eps,i} - A_{\eps,j})(y_k - y_{0}),\xi \rangle| \leq C <\infty. $$
Given that $\{y_k -y_0\}_{k=1}^{d-1}$ forms a basis of $\Pi_\xi$ and that $A_{\eps,i}$ and $A_{\eps,j}$ are skew-affine, we have
$$\limsup_{\eps \to 0}|(A_{\eps,i} - A_{\eps,j})(\xi) | \leq C <\infty ,$$
and so as to avoid a contradiction with \eqref{eqn:xiFurther}, it must also hold that
$$ \lim_{\eps\to 0} |b_{\eps,i} - b_{\eps,j}| = \infty.$$
Using the above two displays and recalling that $\xi$ satisfies \eqref{eqn:xiRestrict}, we have
\begin{equation}
\liminf_{\eps \to 0}| (a_{\eps,i})_{\xi,y_k} -  (a_{\eps,j})_{\xi,y_k}|\geq \liminf_{\eps \to 0}|\langle b_{\eps,i} - b_{\eps,j},\xi\rangle| - \limsup_{\eps \to 0}|\langle y_k, (A_{\eps,i} - A_{\eps, j})(\xi) \rangle| = \infty,
\end{equation}
contradicting \eqref{eqn:falseCondition} and concluding the claim.

\emph{Step 2 (Lower bound for the elastic energy).}
{Take a countable dense collection $\Xi$ of $\mathbb{S}^{d-1}$ and a single subsequence of $(c_\eps,u_\eps,z_\eps)$ so that for every $\xi\in \Xi$, the convergences of Step $1$ and \eqref{eqn:jumpSliceRel} hold for almost every $y \in \Pi_\xi$; this is possible via diagonalization.
For $\xi \in \Xi$,} we prove that 
\begin{equation}\label{eqn:elasticLSC}
\int_{\Omega} |\langle (e(u) - ce_0)\xi,\xi\rangle- w|^2\, dx \leq  \liminf_{\eps\to 0} \int_{\Omega}(z_\eps^2 + \delta^2)|\langle (e(u_\eps) - c_\eps e_0)\xi,\xi\rangle -w|^2\, dx
\end{equation}
for all $w\in L^2(\Omega)$. But this follows immediately noting that by \eqref{eqn:derivativeSlice} and Fubini's theorem, we have
\begin{align*}\nonumber
& \int_{\Omega} (z_\eps^2 + \delta^2)|\langle (e(u_\eps) - c_\eps e_0)\xi,\xi\rangle -w|^2\, dx  \\
 &\qquad  = \int_{\Pi_\xi}\left(\int_{\Omega_{\xi,y}}((z_\eps)_{\xi,y}^2 + \delta^2)|{\tfrac{d}{dt}} (u_\eps)_{\xi,y} - (c_\eps)_{\xi,y} \langle e_0\xi,\xi\rangle -w_{\xi,y}|^2\, dt \right) d\mathcal{H}^{d-1}(y).
\end{align*}
  Applying \eqref{eqn:1DelasticGlobalModified} on the slices $\Omega_{\xi,y}$ and then using Fatou's lemma gives \eqref{eqn:elasticLSC}.

  Using \eqref{eqn:elasticLSC}, we can show that
  \begin{equation}\label{eqn:weakConvGrads}
  \sqrt{z_\eps^2 + \delta^2}(e(u_\eps) - c_\eps e_0) \weakly (e(u) - c e_0) \text{ in }L^2(\Omega; \R^{d\times d}_{\rm sym}).   
  \end{equation}
  Precisely, supposing that $\sqrt{z_\eps^2 + \delta^2}(e(u_\eps) - c_\eps e_0) \weakly M$ in $L^2(\Omega;\R^{d\times d}_{\rm sym})$ and $z_\eps\to 1$ in $L^2(\Omega)$ (which we may suppose up to a subsequence), we can rewrite \eqref{eqn:elasticLSC} as 
  \begin{align*}\nonumber
  &\int_{\Omega} |\langle (e(u) - ce_0)\xi,\xi\rangle|^2\, dx  - \int_{\Omega} 2 w \langle (e(u) - ce_0 - M)\xi,\xi\rangle \, dx  \\
    & \qquad\qquad \leq \liminf_{\eps\to 0} \int_{\Omega}(z_\eps^2 + \delta^2)|\langle (e(u_\eps) - c_\eps e_0)\xi,\xi\rangle |^2\, dx \leq C_0.
  \end{align*}
  As $w\in L^2(\Omega)$ is arbitrary, this is only possible if $\langle (e(u) - ce_0 - M)\xi,\xi\rangle = 0$ for almost every $x\in \Omega$ and all $\xi \in \mathbb{S}^{d-1}$ (here we have used the density of {$\Xi$}). Diagonalization of the symmetric matrices shows that $e(u) - ce_0 - M = 0$, thereby recovering \eqref{eqn:weakConvGrads}.
  The weak convergence then implies lower semi-continuity of the elastic energy:
  \begin{equation}\label{eqn:liminfElasticFull}
  \int_{\Omega}\CC{(e(u) - ce_0)} \, dx \leq \liminf_{\eps \to 0} (z_\delta^2 + \delta^2)\int_{\Omega}\CC{(e(u_\eps) - c_\eps e_0)} \, dx.
  \end{equation}
  
  \emph{Step 3 (Lower bound for the {total surface} energy).}
  {Take $\xi\in \Xi$ as in Step 2.} As for \eqref{eqn:elasticLSC}, we may apply the $1$D $\liminf$ bound and in particular \eqref{eqn:lowerBoundSurface} to find
  \begin{equation}\nonumber
  \begin{aligned}
&\int_{\Pi_\xi}\left( \int_{\Omega_{\xi,y}} \alpha_{\rm surf} \mathcal{H}^0(\partial^*\{c_{\xi,y}=1\} \setminus (J_{u_{\xi,y}} \cup \partial^*\mathcal{P}_{\xi,y})) + \alpha_{\rm frac}\mathcal{H}^0(J_{u_{\xi,y}} \cup \partial^*\mathcal{P}_{\xi,y}) \, dt \right) d\mathcal{H}^{d-1}(y) \\
& \qquad  \leq  \liminf_{\eps\to 0}\left[\int_{\Omega}\phi_\delta(z_\eps)\left(\frac{W(c_\eps)}{\eps} + \eps|\nabla c_\eps|^2\right) dx + \int_\Omega \left(\frac{V(z_\eps)}{\delta} + \delta|\nabla z_\eps|^2\right)dx \right]
\end{aligned}
\end{equation}
Using \eqref{eqn:jumpSliceRel} and the coarea formula \cite{AmbrosioFuscoPallara}, we can rewrite this as
  \begin{equation}\label{eqn:liminfSurf}
  \begin{aligned}
&\alpha_{\rm surf }\int_{\Omega\cap \big(\partial^*\{c=1\} \setminus (J_{u} \cup \partial^*\mathcal{P})\big)}  |\langle \nu(x), \xi \rangle|\, d\mathcal{H}^{d-1} + \alpha_{\rm frac }\int_{\Omega \cap \big(J_{u} \cup \partial^*\mathcal{P}\big)}  |\langle \nu(x), \xi \rangle|\, d\mathcal{H}^{d-1} \\
& \qquad  \leq  \liminf_{\eps\to 0}\left[\int_{\Omega}\phi_\delta(z_\eps)\left(\frac{W(c_\eps)}{\eps} + \eps|\nabla c_\eps|^2\right) dx + \int_\Omega \left(\frac{V(z_\eps)}{\delta} + \delta|\nabla z_\eps|^2\right)dx \right],
\end{aligned}
\end{equation}
  where $\nu$ is the underlying measure-theoretic normal associated to the surface being integrated.
 As \eqref{eqn:liminfSurf} holds with $\Omega$ replaced by any open subset $\Omega'$, a classical localization argument for measures (see, e.g., \cite[Theorem 1.16 and Section 4.1]{braidesFreeDiscont}) effectively choosing $\xi = \nu(x)$ at each point shows that
  \begin{equation}\label{eqn:liminfSurfFull}
  \begin{aligned}
&\alpha_{\rm surf }\mathcal{H}^{d-1}\big(\partial^*\{c=1\} \setminus (J_{u} \cup \partial^*\mathcal{P})\big)   + \alpha_{\rm frac }\mathcal{H}^{d-1}\big(J_{u} \cup \partial^*\mathcal{P}\big)  \\
& \qquad  \leq  \liminf_{\eps\to 0}\left[\int_{\Omega}\phi_\delta(z_\eps)\left(\frac{W(c_\eps)}{\eps} + \eps|\nabla c_\eps|^2\right) dx + \int_\Omega \left(\frac{V(z_\eps)}{\delta} + \delta|\nabla z_\eps|^2\right)dx \right].
\end{aligned}
\end{equation}

  \emph{Step 4 (Conclusion).} The inequalities \eqref{eqn:liminfElasticFull} and \eqref{eqn:liminfSurfFull} conclude the proof of the $\liminf$ inequality.
\end{proof}

\section{The $\limsup$ inequality}\label{sec:limsup}

In this section, we show that the lower bound found via the $\liminf$ inequality is optimal, and for a given target function $(c,u)\in BV(\Omega;\{0,1\})\times GSBD^2(\Omega)$, we construct a recovery sequence $(c_\eps,u_\eps, z_\eps)$ obtaining the lower bound as its energetic limit. 
Before this, we prove two auxiliary results: the first is a density result for sets of finite perimeter, and the second is a technical lemma showing that the {interfacial} energy contained in the crack and phase boundary's overlap may be excluded in the limit.

As is often the case in constructing recovery sequences, our proof uses density results to approximate $c$ and $u$ by more regular functions, which can be explicitly used to define the recovery sequence. For both functions, we require relatively strong approximations for the discontinuity sets to ensure that the phase boundary energy $\mathcal{H}^{d-1}(\partial^* \{c=1\}\setminus J_u)$ is stable with respect to the approximation.
For $GSBD^2$ functions, existing results suffice. For sets of finite perimeter, we simplify an approximation argument used by Bungert and the first author \cite{bungertStinson2024} that relies on an $SBV$ approximation result of De Philippis et al. \cite{dePhilippis2017}.

 The second technical lemma is particular to our case. The point is that even when the boundary $\partial^* \{c=1\}$ and the jump-set $J_u$ are smooth sets, the intersection between them may not be a particularly regular set. To recover the energy $\mathcal{H}^{d-1}(\partial^*\{c=1\}\setminus J_u)$ with our smooth approximation, we show that
 $$\mathcal{H}^{d-1}(\{\operatorname{dist}(x,\{c=1\}) = r\} \setminus \{\operatorname{dist}(x,J_u)<3r\}) \xrightarrow[r \to 0]{} \mathcal{H}^{d-1}(\partial^*\{c=1\}\setminus J_u). $$
  For $\Gamma$-convergence of the Cahn--Hilliard energies, i.e., without an elastic interaction, a simpler statement is used for $\partial^* \{c=1\}$ given by a $C^2$ surface.  

\begin{lemma}[Approximation for a set of finite perimeter]\label{lem:setApprox}
Given $c \in BV(\Omega ; \{0,1\})$ and $\eta>0$, we can find a set of finite perimeter $A$ such that 
 \begin{equation}\label{eqn:BVnormControl}
 \|c - \chi_A\|_{BV(\Omega)} \leq \eta
 \end{equation}
 and there is a closed set $N\subset\subset \Omega$ with $\mathcal{H}^{d-1}(N) = 0$ such that for every $x\in (\partial A\cap \Omega) \setminus N$, there is a radius $r_x>0$ such that $\partial^* A \cap B(x,r_x) = \partial A\cap B(x,r_x)$ is a $C^1$ {manifold}. Further $A\subset\Omega$ can be extended to a set still denoted by $A\subset{\R^d}$ such that $\partial A\subset {\R^d}$ is a $C^1$ {manifold} in a neighborhood of $\partial \Omega$, and $\partial A$ has transverse intersection with $\partial \Omega$ in the sense that
$\mathcal{H}^{d-1}(\partial A\cap \partial \Omega)  = 0 .$
\end{lemma}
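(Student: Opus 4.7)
The plan is to deduce the lemma from an existing approximation theorem for sets of finite perimeter and then fix the behavior at $\partial\Omega$ by a small translation.

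\textbf{Baseline approximation.} First, I would extend $c$ to $\tilde c\in BV(\R^d;\{0,1\})$ with compact support by setting $\tilde c\equiv 0$ outside $\Omega$; the total variation in $\R^d$ is at most $|Dc|(\Omega)+\mathcal{H}^{d-1}(\partial\Omega)<\infty$. Then I would invoke the density result used in \cite{bungertStinson2024}, which rests on the $SBV$ approximation theorem of De Philippis, Fusco, and Pratelli \cite{dePhilippis2017}: for any $\eta'>0$ there exists a set of finite perimeter $A_0\subset\R^d$ and a closed set $N_0\subset\R^d$ with $\mathcal{H}^{d-1}(N_0)=0$ such that
\[
\|\chi_{\{\tilde c=1\}}-\chi_{A_0}\|_{BV(\R^d)}<\eta',
\]
and in a neighborhood of every $x\in \partial A_0\setminus N_0$ the topological boundary $\partial A_0$ coincides with the reduced boundary $\partial^* A_0$ and is a $C^1$ hypersurface.

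\textbf{Localization of the singular set.} To ensure $N_0\subset\subset\Omega$, I would apply the previous step on a bounded Lipschitz open set $\Omega''\supset\supset\Omega$ and, exploiting the locality of the approximation, match $A_0$ with the empty set in a neighborhood of $\partial\Omega''$ (which is possible since $\tilde c\equiv 0$ there); this costs only an arbitrarily small additional $BV$-error and forces all singularities of $\partial A_0$ to lie in a compact subset of $\Omega$.

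\textbf{Transverse intersection with $\partial\Omega$.} Second, I would perturb $A_0$ so that $\partial A_0$ meets $\partial\Omega$ in an $\mathcal{H}^{d-1}$-null set. Fix a unit vector $v\in\mathbb{S}^{d-1}$ and set $A_t:=A_0+tv$. Both $\partial A_0$ and $\partial\Omega$ are countably $(d-1)$-rectifiable sets of locally finite $\mathcal{H}^{d-1}$ measure. A Fubini/coarea argument then shows that the set $\{t\in(-\tau,\tau):\mathcal{H}^{d-1}(\partial A_t\cap\partial\Omega)>0\}$ is $\mathcal{L}^1$-null provided that $v$ is chosen so as not to be tangent to $\partial\Omega$ on a set of positive $\mathcal{H}^{d-1}$ measure; such a $v$ exists by the Lipschitz regularity of $\partial\Omega$. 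Selecting $|t|$ small and in the complement of this null set, the translated set $A:=A_t$ satisfies \eqref{eqn:BVnormControl}, its singular set $N:=N_0+tv$ remains compactly contained in $\Omega$, and $\mathcal{H}^{d-1}(\partial A\cap\partial\Omega)=0$. Because a neighborhood of $\partial\Omega$ avoids $N$, the topological boundary $\partial A$ is already a $C^1$ manifold there, which provides the required extension to $\R^d$.

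The main obstacle will be a clean execution of the transverse intersection argument. The key input is the rectifiability of both $\partial A_0$ and $\partial\Omega$ combined with a careful choice of translation direction $v$ so that $\partial\Omega$ is not ``parallel'' to $\partial A_0$ on a set of positive measure. The localization step, while intuitive, also requires verifying that the De Philippis--Fusco--Pratelli construction can be carried out locally and matched with a smooth template near $\partial\Omega''$ without creating new singularities at the interface.
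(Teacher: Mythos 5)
Your proposal has two genuine gaps, and both concern steps that are in fact the substance of the paper's argument.

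First, the ``baseline approximation'' is asserted rather than proved. The De Philippis--Fusco--Pratelli theorem approximates $SBV$ \emph{functions} by functions with regular jump sets; it does not directly produce a \emph{set} $A_0$ with $\|\chi_{\{c=1\}}-\chi_{A_0}\|_{BV}\le\eta'$ and regular topological boundary. Note that the $BV$-norm control is much stronger than the classical smooth approximation of sets of finite perimeter (which gives $L^1$ convergence and convergence of perimeters, but not smallness of $|D\chi_{\{c=1\}}-D\chi_{A_0}|(\Omega)$, i.e.\ not smallness of $\mathcal{H}^{d-1}(\partial^*\{c=1\}\triangle\partial^* A_0)$). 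The paper's proof derives the set-level statement from the function-level one by taking $A=\{v>s\}$ for a carefully selected level $s\in(3/8,5/8)$: Sard's theorem gives regularity of $\{v=s\}$ away from the approximating manifold $M$; a ``disjoint level sets can charge $M$ only countably often'' argument handles the intersection with $M$ and also yields the transverse intersection with $\partial\Omega$; and the coarea formula together with the jump-amplitude estimates from \cite{dePhilippis2017} shows that a positive fraction of levels $s$ gives the $BV$ bound. This level-selection argument is the core of the proof and is entirely absent from your proposal; citing ``the density result used in \cite{bungertStinson2024}'' amounts to citing (a version of) the lemma you are asked to prove.

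Second, the translation step is incorrect as written: total variation is not continuous under translation. For any fixed $t\neq 0$ the boundaries $\partial^* A_0$ and $\partial^* A_0+tv$ are generically disjoint, so $|D\chi_{A_0}-D\chi_{A_0+tv}|(\Omega)$ is of order $2\,\mathcal{H}^{d-1}(\partial^*A_0\cap\Omega)$, not $o(1)$ as $t\to 0$. Hence $\|c-\chi_{A_t}\|_{BV(\Omega)}$ is roughly $2|Dc|(\Omega)$ rather than $O(\eta)$, and \eqref{eqn:BVnormControl} is destroyed. (Your Fubini argument for finding a transverse $t$ is plausible in isolation, but it cannot be combined with the $BV$ estimate.) The paper avoids this entirely: $v$ is Sobolev near $\partial\Omega$, is extended smoothly across $\partial\Omega$ by reflection and mollification, and the transversality $\mathcal{H}^{d-1}(\partial A_s\cap\partial\Omega)=0$ for almost every level $s$ again follows from the countable-charging argument, with no perturbation of the set needed.
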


\begin{proof}
The idea of the proof is to approximate $c$ by an $SBV$ function $v$ that has a regular jump-set, and then define $A$ as the super level-set $\{v>s\}$ for a well-chosen $s$. The approximation $v$ is constructed by applying a density result, for which $v$ has a continuous representative on either side of its jump-set. Applying Sard's theorem, we will see that the boundary of the super level-set is regular, as needed. The estimate on the $BV$ norm of $c- \chi_{\{v>s\}}$ follows from direct estimates. 

\emph{Step 1 ($SBV$ approximation).} Fix any $p\in (d,\infty).$
Applying \cite[Theorem C]{dePhilippis2017} to $c-1/2$, we find a $C^1$ manifold $M\subset\subset \Omega$ {(possibly with boundary)} and a function $v\in C^1(\Omega\setminus M)\cap SBV^p(\Omega)$ with
\begin{equation}\label{eqn:DePhilippisApprox}
\begin{aligned}
\|c - v\|_{BV(\Omega)} & \leq \eta \\
\| v-1/2\|_{L^\infty(\Omega)} & \leq 1/2 \\
\mathcal{H}^{d-1}(\partial^*\{c=1\} \triangle J_{v})&\leq \eta \\
\mathcal{H}^{d-1}(M\setminus J_{v}) & = 0 \text{ with }J_{v}\subset M
\end{aligned}
\end{equation}
Note the $L^\infty$ bound holds due to the comment at the top of \cite[pg. 372]{dePhilippis2017}, and we will use that $0\leq v\leq 1$ without further reference. Anytime we write $\partial M$, this refers to the manifold boundary of $M.$

We briefly note that on either side of $M$, or on $\partial \Omega$, the function $v$ coincides with a locally defined continuous representative. Precisely, we cover $M\setminus \partial M$ by countably many balls $B_i \subset \subset \Omega$, $i\in \N$, centered on $M$ with $B_i\cap M $ given by the graph of a $C^1$ function. 
Each $B_i\setminus M$ can be written as the union of two (almost) hemispheres $H^{\pm}_i$ (super- and subgraphs). As $J_v \subset M$ and $v \in SBV^p(\Omega)$, we have $v\llcorner H^\pm_i \in W^{1,p}(H^\pm_i) \hookrightarrow C^{1-d/p}(\overline{H^\pm_i})$ by the Morrey embedding theorem. We use $v_i^\pm \in C^0(\overline{H^\pm_i})$ to denote the continuous extension of $v\llcorner H^\pm_i$ defined also on $\partial H^\pm_i$. Though we do not carry out the details, a local reflection and mollification argument on $\partial \Omega$ (around which $v$ is Sobolev) shows that we may suppose $v$ has a $C^\infty$ extension in a neighborhood of the boundary $\partial \Omega$.

\emph{Step 2 (Regularity of level-sets).}
We will choose $A = \{v>s\}$ for a well chosen $s$, as determined in Step 3 below. Here we show that for almost every choice of $s\in (0,1)$ the boundary $\partial \{v>s\}$ is regular as in the lemma statement. The set $N$ effectively comes from $\partial M$ and the intersection of $\overline{\{v=s\}\setminus M}$ with $M$. As $\partial M$ is closed and $\mathcal{H}^{d-1}$-negligible, we only have to address the size of the intersection of the level-set with $M\setminus \partial M.$
The regularity across $\partial \Omega$ will follow as in the interior of $\Omega\setminus  M$ since $v$ was defined so that it can be smoothly extended outside of $\Omega$, and as such, we do not spell out the details (note the transverse intersection condition follows from the same reasoning as \eqref{eqn:hemi1} below).

\emph{Substep 2.1 (Interior regularity).}
Since $v\in C^1(\Omega\setminus M)$, we may apply Sard's theorem to find that for almost every $s \in (0,1)$, the set $\partial \{v>s\}\cap (\Omega\setminus M) = \{v=s\} \cap (\Omega\setminus M)$ is (locally) a $C^1$ surface in $\Omega\setminus M$.

\emph{Substep 2.2 (Intersection of the level-set with $M$).}
Fix one of the hemispheres $H_i^+$. As $v$ is continuous up to $M$ from inside $H_i^+$, $$\overline{\{v = s\}\cap H_i^+}\cap M \cap B_i \subset \{v^+_i = s\} \cap M \cap B_i.$$  
The important takeaway from this relation is that we may cover these surface intersections by pairwise disjoint sets; precisely, 
$$\left(\{v^+_i = s\}\cap M \cap B_i\right) \cap \left(\{v^+_i = t\}\cap M \cap B_i\right) =\emptyset$$
for all $s,t\in (0,1)$ with $s\neq t$. Since $\mathcal{H}^{d-1}(M)<\infty$, only a countable number of these sets may be `charged' so that 
\begin{equation}\label{eqn:hemi1}
 \mathcal{H}^{d-1}(\{v^+_i = s\}\cap M \cap B_i) = 0
 \end{equation}
for almost every $s\in (0,1)$. 

Uniting this insight for every hemisphere $H^{\pm}_i$,
we define $$N = N(s) : = \partial M \cup \bigg(\bigcup_{i,\pm}(\{v^\pm_i = s\}\cap M \cap B_i)\bigg).$$ As the hemispheres $H^\pm_i$ are countable and $\mathcal{H}^{d-1}(\partial M) = 0$, we improve \eqref{eqn:hemi1} to
\begin{equation}\nonumber
\mathcal{H}^{d-1}(N) = 0
\end{equation}
for almost every $s\in (0,1)$. Note $N$ is closed as the $B_i$ cover $M\setminus \partial M.$

\emph{Substep 2.3 (Regularity away from $N$).} By Substep 2.1, it remains to show that if $x \in \partial\{v>s\}\cap M \setminus N$, then there is some $r_x >0$ such that $\partial\{v>s\}\cap B(x,r_x)$ is a $C^1$ surface. In fact, we show there is $r_x>0$ with 
\begin{equation}\label{eqn:surfaceIdent}
\partial\{v>s\} \cap B(x,r_x) = M \cap B(x,r_x) .
\end{equation} {Take $i$ such that $x\in B_i$ also.} Since $x\not \in N$, we have that $v^\pm_i(x)\neq s$. Necessarily the values must be on opposite sides of $s$, so that without loss of generality, $v^+_i(x)>s>v^-_i(x)$. Then \eqref{eqn:surfaceIdent} follows immediately from the continuity of $v^\pm_i$.

\emph{Step 3 (Estimate on the $BV$ norm).}
We define $A_s : = \{v>s\}$. We restrict our consideration to $s \in (3/8,5/8)$ and show that there is at least one such $s$ such that $A = A_s$ satisfies estimate \eqref{eqn:BVnormControl} and the regularity of Step 2. So as to make notation less cumbersome, we use that $J_c = \partial^*\{c=1\}$.

The estimate for the $L^1$ norm follows from \eqref{eqn:DePhilippisApprox} with
\begin{equation}
\int_{\Omega}|c - \chi_{A_s}|\, dx \leq   \int_{\{c=1\}\setminus A_s} \frac{|c-v|}{1-s}\, dx + \int_{ A_s\setminus \{c=1\}} \frac{|c-v|}{s}\, dx\leq \frac{8}{3}\int_{\Omega}|c-v|\, dx \leq C\eta.
\end{equation}

The $BV$ seminorm is more involved. First, note 
\begin{equation}\label{eqn:TVsplit}
|D(c- \chi_{A_s})|(\Omega) = \mathcal{H}^{d-1}(J_c \triangle \partial^* A_s) + 2\mathcal{H}^{d-1}(J_c \cap \partial^* A_s\cap \{n_c \neq n_{A_s}\}), 
\end{equation}
where $n_c$ and $n_{A_s}$ are the measure theoretic inner normals for $\{c=1\}$ and $A_s$, respectively. To estimate the Hausdorff measure of the sets on the right side of \eqref{eqn:TVsplit}, we case on whether we are inside or outside of $J_v$. 

For the first term on the right-hand side of \eqref{eqn:TVsplit}, we have
\begin{equation}\label{eqn:TVsplit2}
\begin{aligned}
\mathcal{H}^{d-1}(J_c \triangle \partial^* A_s) &= \mathcal{H}^{d-1}(J_c \setminus (\partial^* A_s \cup J_v)) + \mathcal{H}^{d-1}((\partial^* A_s \cap J_v) \setminus J_c) \\
& \quad + \mathcal{H}^{d-1}((J_c \cap J_v )\setminus \partial^* A_s) + \mathcal{H}^{d-1}(\partial^* A_s \setminus (J_c \cup J_v))
\end{aligned}
\end{equation}
By the third inequality of \eqref{eqn:DePhilippisApprox}, the first and second term on the right-hand side are bounded from above by $\eta$. To control the third term on the right-hand side of \eqref{eqn:TVsplit2}, note that if $x \in J_v\setminus \partial^* A_s$ and we let $v^+(x)>v^-(x)$ denote the (ordered) one-sided trace values on $J_v$, then either $1\geq v^+(x)>v^-(x)\geq s$ or $s\geq v^+(x)>v^-(x)\geq 0$. Given that $s\in (3/8,5/8)$, it follows that $|1- (v^+(x) - v^-(x))|>1/4$. From this, we have
\begin{equation}\nonumber
\begin{aligned}
\mathcal{H}^{d-1}((J_c \cap J_v )\setminus \partial^* A_s) &\leq \mathcal{H}^{d-1}(J_c \cap J_v \cap \{|1- (v^+ - v^-)|>1/4\}) \\
& \leq 4\int_{J_c \cap J_v}|1- (v^+ - v^-)| \, d\mathcal{H}^{d-1} \\
& \leq 4 \int_{J_c \cap J_v}|(c - v)^+ - (c-v)^-| \, d\mathcal{H}^{d-1} \leq 4\|c-v\|_{BV(\Omega)}\leq 4\eta,
\end{aligned}
\end{equation}
where we use that $c^+ = 1$ and $c^- = 0$ $\mathcal{H}^{d-1}$-almost everywhere on $J_c \cap J_v$  and $\pm (c-v)^\pm \geq \pm (c^\pm - v^\pm)$. The fourth term on the right-hand side of \eqref{eqn:TVsplit2} may be estimated using the coarea formula as follows:
\begin{equation}\label{eqn:coareaEst}
\int_{\R}\mathcal{H}^{d-1}(\partial^*A_s\setminus (J_c\cup J_v))\, ds = \|\nabla v\|_{L^1(\Omega)}\leq \|c-v\|_{BV(\Omega)}\leq \eta,
\end{equation}
where we have use that the absolutely continuous part of $Dc $ is $0$. Thus, \eqref{eqn:coareaEst} implies there is a $\tfrac12$-fraction of $s\in (3/5,5/8)$ such that
$$ \mathcal{H}^{d-1}(\partial^*A_s\setminus (J_c\cup J_v)) \leq 8\eta.$$ With this last estimate, we see that for a $\tfrac12$-fraction of $s\in (3/8,5/8)$ the quantity in \eqref{eqn:TVsplit2} is controlled by $C\eta$ for some $C >0$.

For the second term on the right-hand side of \eqref{eqn:TVsplit}, we see as before that it suffices to estimate $\mathcal{H}^{d-1}(J_c \cap \partial^* A_s\cap J_v \cap \{n_c \neq n_{A_s}\})$ by $\eta$. For this, we note that $\mathcal{H}^{d-1}$-almost every $x\in J_c \cap J_v \cap \{n_c \neq n_{A_s}\}$ one has $(c-v)^+(x) - (c-v)^-(x) = 1 -v^-(x) - (0 - v^+(x))\geq 1$. Consequently, it follows that
\begin{equation}\nonumber
\begin{aligned}
\mathcal{H}^{d-1}(J_c \cap \partial^* A_s\cap J_v \cap \{n_c \neq n_{A_s}\}) \leq \int_{J_c \cap J_v} |(c-v)^+ - (c-v)^-|\, d \mathcal{H}^{d-1}\leq \|c-v\|_{BV(\Omega)}\leq \eta.
\end{aligned}
\end{equation} 
Synthesizing the above estimates with \eqref{eqn:TVsplit}, we are able to find $s\in (3/8,5/8)$ such that $A_s$ satisfies the regularity of Step 2 and \eqref{eqn:BVnormControl} once the constant is absorbed into $\eta$.
\end{proof}

\begin{lemma}[Exclusion of the jump-energy]\label{lem:jumpExclusion}
Suppose $A\subset\Omega$ can be extended to a set still denoted by $A\subset{\R^d}$ such that $\partial A\subset {\R^d}$ is a $C^1$ manifold in a neighborhood of $\Omega$, and further $\partial A$ has transverse intersection with $\partial \Omega$ in the sense that
$\mathcal{H}^{d-1}(\partial A\cap \partial \Omega)  = 0 .$
 Then for any $M\subset \Omega$, it holds that
\begin{equation}\label{eqn:jumpExclusion1}
\limsup_{r\to 0}\mathcal{H}^{d-1}\big((\{\operatorname{dist}(x,A) = r\}\cap \Omega) \setminus \{\operatorname{dist}(x,M)<3r\}\big) \leq \mathcal{H}^{d-1}((\partial A \cap \Omega) \setminus M),
\end{equation}
where $\operatorname{dist}(x,A)$ refers to the extended set $A\subset \R^d$.
\end{lemma}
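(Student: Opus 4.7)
The plan is to use the tubular-neighborhood structure around $\partial A$ to push the surface area of the level sets $\{\operatorname{dist}(\cdot, A) = r\}$ back onto $\partial A$, where $M$ has already been excised. Since $\partial A$ is a $C^1$ manifold in a neighborhood $U \supset \overline{\Omega}$, the tubular neighborhood theorem yields $r_0 > 0$ such that the nearest-point projection $\pi$ onto $\partial A$ is single-valued on $\{x \in U : \operatorname{dist}(x, \partial A) < r_0\}$. For every $r \in (0, r_0)$, the level set $S_r := \{\operatorname{dist}(\cdot, A) = r\} \cap U$ is a $C^1$ hypersurface parallel to $\partial A$, and the restriction of $\pi$ to $S_r$ is a bi-Lipschitz homeomorphism onto its image with bi-Lipschitz constant tending to $1$ uniformly on $\overline{\Omega}$ as $r \to 0$.

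Set $E_r := (S_r \cap \Omega) \setminus \{\operatorname{dist}(\cdot, M) < 3r\}$. The key observation will be that $\pi$ maps $E_r$ into $\partial A \setminus M$ modulo a vanishing boundary piece. First, for $x \in E_r$ we have $|x - \pi(x)| = r$ and $\operatorname{dist}(x, M) \geq 3r$, so the triangle inequality gives $\operatorname{dist}(\pi(x), M) \geq 2r > 0$, and thus $\pi(x) \notin M$. Second, if $x \in E_r \subset \Omega$ has $\pi(x) \notin \Omega$, then the segment from $x$ to $\pi(x)$ crosses $\partial \Omega$, so $\operatorname{dist}(\pi(x), \partial \Omega) \leq r$. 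Combining,
\[
\pi(E_r) \subseteq \big((\partial A \cap \Omega) \setminus M\big) \cup \{y \in \partial A : \operatorname{dist}(y, \partial \Omega) \leq r\}.
\]

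Using the bi-Lipschitz bound for $\pi^{-1}$ on $S_r$ together with $\mathcal{H}^{d-1}$-subadditivity yields
\[
\mathcal{H}^{d-1}(E_r) \leq (1 + \eta(r))\Big[\mathcal{H}^{d-1}\big((\partial A \cap \Omega) \setminus M\big) + \mathcal{H}^{d-1}\big(\partial A \cap \{\operatorname{dist}(\cdot, \partial \Omega) \leq r\}\big)\Big]
\]
with $\eta(r) \to 0$ as $r \to 0$. The transversality condition $\mathcal{H}^{d-1}(\partial A \cap \partial \Omega) = 0$ together with outer continuity of the finite Radon measure $\mathcal{H}^{d-1}$ restricted to $\partial A \cap U$, applied to the decreasing family $\{\operatorname{dist}(\cdot, \partial \Omega) \leq r\}$, forces the second term to vanish as $r \to 0$. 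Taking $\limsup$ in $r$ then gives \eqref{eqn:jumpExclusion1}.

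The main obstacle will be controlling the behavior of $\pi$ near $\partial \Omega$: points of $S_r$ lying inside $\Omega$ may project to parts of $\partial A$ outside $\Omega$, producing surface area not reflected on the right-hand side of \eqref{eqn:jumpExclusion1}. The transversality hypothesis is precisely what makes this boundary contribution vanish as $r \to 0$; without it one would only recover the estimate with $\partial A \cap \Omega$ replaced by $\partial A$ in a neighborhood of $\overline{\Omega}$.
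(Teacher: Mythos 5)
Your overall strategy -- push the level set $\{\operatorname{dist}(\cdot,A)=r\}$ onto $\partial A$ by a projection, use a triangle inequality to see that the projected points stay a distance $\geq 2r$ from $M$, and kill the boundary contribution with the transversality hypothesis -- is the same as the paper's, and your treatment of the two easy pieces (the exclusion of $M$ and the vanishing of $\mathcal{H}^{d-1}(\partial A\cap\{\operatorname{dist}(\cdot,\partial\Omega)\leq r\})$) is sound. However, the load-bearing step is not justified: the tubular neighborhood theorem in the form you invoke it (single-valued nearest-point projection $\pi$ on a uniform neighborhood, level sets $S_r$ that are $C^1$ hypersurfaces ``parallel'' to $\partial A$, and $\pi|_{S_r}$ bi-Lipschitz with constant $1+\eta(r)\to 1$) requires $C^{1,1}$ regularity, i.e.\ positive reach, not merely $C^1$. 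For a $C^1$ hypersurface the reach can be zero: already for the planar curve $\{y=|x|^{3/2}\}$, which is $C^1$ but has curvature blowing up at the origin, points on the positive $y$-axis arbitrarily close to the origin have two distinct nearest points, so $\pi$ is not single-valued on any neighborhood, and the level sets of the distance function are only Lipschitz (they develop corners). Consequently the inequality $\mathcal{H}^{d-1}(E_r)\leq(1+\eta(r))\mathcal{H}^{d-1}(\pi(E_r))$, which is where all the area comparison happens, has no justification as written.

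The paper circumvents exactly this obstruction. It first localizes (Morse covering) to cubes in which $\partial A$ is the graph of a $C^1$ function $g_0$ with Lipschitz constant $<\tfrac12$, then observes that the level set $\{\operatorname{dist}(\cdot,A)=r\}$ is the graph of the Lipschitz function $g_r(x')=\sup_{\nu\in\mathbb{S}^{d-1}}\{g_0(x'+r\nu')-r\nu_d\}$ and proves $\|\nabla g_r-\nabla g_0\|_{L^\infty}\leq\omega(r)$ using only the modulus of continuity of $\nabla g_0$ (no curvature bound). The projection used is the \emph{vertical} projection $x\mapsto x'$, not the nearest-point projection, and the area comparison comes from the area formula with densities $\sqrt{1+|\nabla g_r|^2}\to\sqrt{1+|\nabla g_0|^2}$; the triangle-inequality step $|x'-y'|\geq r$ then plays the role of your exclusion of $M$. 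To repair your argument you would either need to strengthen the hypothesis to $C^{1,1}$ (which the approximation produced by Lemma \ref{lem:setApprox} does not provide) or replace the nearest-point projection by such a local graph construction.
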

\begin{proof}
\emph{Step 1 (Reduction to compactly contained sets).}
We will show that for any $\Omega ' \subset\subset \Omega$
\begin{equation}\label{eqn:jumpExclusion}
\limsup_{r\to 0}\mathcal{H}^{d-1}\big((\{\operatorname{dist}(x,A) = r\}\cap \Omega') \setminus \{\operatorname{dist}(x,M)<3r\}\big) \leq \mathcal{H}^{d-1}((\partial A \cap \Omega) \setminus M).
\end{equation}
In fact, since $\partial A$ is regular in a neighborhood of $\partial \Omega$, the same argument will show that for small $\eta>0$
\begin{equation}\nonumber
\limsup_{r\to 0}\mathcal{H}^{d-1}\big(\{\operatorname{dist}(x,A) = r\}\cap \{\operatorname{dist}(x,\partial \Omega)<\eta\} \big) \leq \mathcal{H}^{d-1}(\partial A \cap \{\operatorname{dist}(x,\partial \Omega)<2\eta\}).
\end{equation}
Assuming that the above equations hold, we take $\Omega' = \{\operatorname{dist}(x, \Omega^c)>\tfrac12\eta\}$, apply the above two inequalities, and then send $\eta\to 0$ to find
\begin{equation}\nonumber
\begin{aligned}
\limsup_{r\to 0}\mathcal{H}^{d-1}& \big((\{\operatorname{dist}(x,A) = r\}\cap \Omega) \setminus \{\operatorname{dist}(x,M)<3r\}\big) \\
& \leq \mathcal{H}^{d-1}((\partial A \cap \Omega) \setminus M) + \limsup_{\eta\to 0}\mathcal{H}^{d-1}(\partial A \cap \{\operatorname{dist}(x,\partial \Omega)<2\eta\}) \\
& \leq \mathcal{H}^{d-1}((\partial A \cap \Omega) \setminus M) + \mathcal{H}^{d-1}(\partial A \cap \partial \Omega),
\end{aligned}
\end{equation}
which gives \eqref{eqn:jumpExclusion1} due to the transverse intersection assumption. We now prove \eqref{eqn:jumpExclusion}.

\emph{Step 2 (Reduction to cubes).} We \textbf{claim} that \eqref{eqn:jumpExclusion} holds in general if it holds when $\Omega'$ and $\Omega$ are both replaced by $Q$, where $Q$ is any cube such that in $(1+\theta)Q$, for some $\theta\in (0, 1)$, $\partial A$ is the graph of a $C^1$ function (up to rotation and passing through the center of the cube) with Lipschitz constant less than $\frac12$. With this claim in hand, we prove the general version of \eqref{eqn:jumpExclusion}.

We apply the Morse measure covering theorem \cite{fonseca2007modern}[Theorem 1.147] to find a disjoint (almost-)cover of $\partial A\cap \overline{\Omega'}$ by countably many cubes $\{Q_i\}_{i\in \N}$, with side length $s_i>0$, such that $\partial A\cap 2Q_i$ is given by the graph (up to a rotation and passing through the center of the cube) of a $C^1$ function having Lipschitz constant less than $\frac12$ and 
$$\mathcal{H}^{d-1}(\partial A\cap \overline{\Omega'} \setminus \cup_{i\in \N}Q_i ) = 0 .$$ As each of the graphs is Lipschitz, we have for any $0 < \theta \ll 1$ that
$$\mathcal{H}^{d-1}(\partial A\cap ((1+\theta)Q_i\setminus Q_i))\leq 4(d-1)\theta s_i^{d-1} \quad \text{ and }\quad   s_i^{d-1} \leq \mathcal{H}^{d-1}(\partial A \cap Q_i),$$ where we have used the area formula and that $(1+\theta)^{d-1}\leq 1+ 2(d-1)\theta$ for sufficiently small $\theta>0.$
We take $I \subset \N$ to be a finite collection of indices such that $\partial A\cap \overline{\Omega'} \subset\cup_{i\in I}(1+\theta)Q_i$. Then we apply \eqref{eqn:jumpExclusion} with both $\Omega'$ and $\Omega$ replaced by $(1+\theta)Q_i$ to find that
\begin{equation}\nonumber
\begin{aligned}
\limsup_{r\to 0}\mathcal{H}^{d-1}& \big((\{\operatorname{dist}(x,A) = r\}\cap \Omega') \setminus \{\operatorname{dist}(x,M)<3r\}\big)\\ &\leq \sum_{i\in I} \limsup_{r\to 0}\mathcal{H}^{d-1}\big((\{\operatorname{dist}(x,A) = r\}\cap (1+\theta)Q_i) \setminus \{\operatorname{dist}(x,M)<3r\}\big) \\
& \leq \sum_{i\in I} \mathcal{H}^{d-1}((\partial A \cap (1+\theta)Q_i) \setminus M)\\
& \leq \sum_{i\in I}\left(\mathcal{H}^{d-1}((\partial A \cap Q_i) \setminus M) +4(d-1) \theta  s_i^{d-1}\right)\\
& \leq \mathcal{H}^{d-1}(\partial A   \setminus M) + 4(d-1)\theta \mathcal{H}^{d-1}(\partial A) .
\end{aligned}
\end{equation}
As $0 < \theta \ll 1$ is arbitrary, this concludes the proof assuming that \eqref{eqn:jumpExclusion} holds on cubes. 

\emph{Step 3 (Cubes).} We prove the \textbf{claim} of Step 2. 
We first show that the levels-sets of the distance function are well-behaved in the cube, and then we use this to show that we can exclude the contribution of $M$ in the limit of the surface measure. Without loss of generality, we suppose that $Q = Q' \times (-s/2,s/2) = (-s/2,s/2)^d$ with $0\in \partial A$ and $s>0$, and we let $g_0:(1+\theta)Q'\to (-s/2,s/2)$ be the $C^1$ function having Lipschitz constant less than $\frac12$ such that $A$ is the subgraph of $g_0$ in $(1+\theta)Q$. Throughout we rely on the convention that $x = (x',x_d)\in \R^{d-1}\times \R$.

\emph{Substep 3.1 (Convergence of the level-sets).} 
We note that $\{\operatorname{dist}(x,A)=r\}\cap Q$ is the graph of a Lipschitz function $g_r:Q'\to (-s/2,s/2)$, for small $r>0,$ as the level-set of $\operatorname{dist}$ can be found by taking the supremum over all $r$-size translations of the graph of $g_0$, that is, 
\begin{equation}\label{eqn:ghjkt}
g_r(x') = \sup_{(\nu',\nu_d) \in \mathbb{S}^{d-1}}\{g_0(x'+r\nu')- r\nu_d\}.
\end{equation} 
Since $g_0$ is $C^1$, the graph actually converges in $W^{1,\infty}(\Omega)$ with 
\begin{equation}\label{eqn:infinityConvergence}
\|\nabla g_r - \nabla g_0\|_{L^\infty (Q')} \leq \omega(r),
\end{equation}
where $\omega$ is the (continuous) modulus of continuity of $\nabla g_0$ in $(1+\theta)Q'$. To see this, we note that for sufficiently small $h\in \R^{d-1}$ and $x' \in Q'$, by \eqref{eqn:ghjkt}, there is always a $\nu_{h}\in \mathbb{S}^{d-1}$ such that $g_r(x'+h) = g_0(x'+h+r\nu'_{h})- r\nu_{h,d}$. Using the mean value theorem, we can estimate
\begin{align*}
g_r(x'+h) - g_r(x') &\leq  g_0(x'+h+r\nu'_{h})- r\nu_{h,d} -(g_0(x'+r\nu'_{h})- r\nu_{h,d}) \\
&=  \langle\nabla g_0 (x' +\vartheta h + r\nu'_{h}), h\rangle \leq   \langle\nabla g_0 (x'), h\rangle + |h|\omega(|h| + r),
\end{align*}
where $\vartheta \in (0,1);$ one obtains the analogous bound from below using $\nu_0$ (defined taking $h=0$). Now assuming that $x'$ is a point of differentiability for $g_r,$ we insert $g_r(x'+h) - g_r(x') = \langle\nabla g_r(x'),h\rangle + o(|h|)$ into the above inequality to find
$$|\langle\nabla g_r(x') - \nabla g_0 (x'),h\rangle|\leq o(|h|) + |h|\omega(|h|+r). $$
Fixing $\tau \in (0,1)$, taking the supremum over $h \in \partial B(0,\tau)$, dividing by $\tau$, and then sending $\tau\to 0$, this inequality becomes
$$|\nabla g_r(x') - \nabla g_0 (x')|\leq \omega(r).$$ As Lipschitz functions are differentiable almost everywhere, we recover \eqref{eqn:infinityConvergence}.

\emph{Substep 3.2 (Exclusion of $M$).}
Let $\pi(x) = x'$ be the projection onto the first $(d-1)$-coordinates. We show that 
\begin{equation}\label{eqn:projectSubset}
\pi\big((\{\operatorname{dist}(x,A)=r\}\cap Q) \setminus \{\operatorname{dist}(x,M) <3r\}\big) \subset \pi((\partial A\cap Q) \setminus M).
\end{equation}
Supposing momentarily that the above set-relation is verified, we use this and the area formula to find
\begin{equation}\nonumber
\begin{aligned}
\mathcal{H}^{d-1}\big((\{\operatorname{dist}(x,A)=r\}\cap Q) \setminus \{\operatorname{dist}(x,M) <3r\}\big) &\leq \int_{\pi((\partial A\cap Q) \setminus M)} \sqrt{1+|\nabla g_r|^2}\, d x'
\end{aligned}
\end{equation}
Taking the $\limsup$ of both sides, using \eqref{eqn:infinityConvergence} and the area formula once again, we have
\begin{equation}\nonumber
\begin{aligned}
\limsup_{r \to 0}\mathcal{H}^{d-1}\big((\{\operatorname{dist}(x,A)=r\}\cap Q) \setminus \{\operatorname{dist}(x,M) <3r\}\big) &\leq   \mathcal{H}^{d-1}((\partial A\cap Q) \setminus M),
\end{aligned}
\end{equation}
proving the \textbf{claim} of Step 2. It remains to prove \eqref{eqn:projectSubset}.

Let $x\in (\{\operatorname{dist}(\tilde x,A)=r\}\cap Q) \setminus \{\operatorname{dist}(\tilde x,M) <3r\}$ and $y \in M \cap \partial A$. We prove $|x' - y'|\geq r$ from which \eqref{eqn:projectSubset} immediately follows. Letting $z\in \partial A$ be such that $|x-z| = \operatorname{dist}(x,A) = r$ and noting by choice of $x$ that $|x-y|\geq 3r$, we estimate
\begin{equation}\nonumber
|x' -y'| \geq  3r- |x_d - y_d|
\geq  3r -  r - \tfrac12 |z' - y'| 
\geq  3r -\tfrac{3}{2}r - \tfrac12 |x' - y'|,
\end{equation}
where we have used the triangle inequality twice and that $|z_d - y_d|\leq \tfrac12 |z'-y'|$ by the Lipschitz continuity of $g_0.$
Rearranging, we have
$|x' -y'| \geq   r $
as desired.
\end{proof}

We make a brief remark on how to construct near optimal profiles for Cahn--Hilliard energies in dimension $d=1$, which will be necessary in the proof of the $\limsup$ inequality.

\begin{remark}[Near optimal transitions]\label{rmk:nearOptimal}
{\normalfont
These ideas were introduced in \cite{Modica87}, see also \cite[Lemma 4.5]{cristoferiGravina}. For a fixed parameter $\lambda>0$, $\eps > 0$, and potential $f: \R\to [0,\infty)$, we define $\zeta_\eps^f$ as
\begin{equation}\nonumber
\zeta_\eps^f(s) : = \int_0^s \frac{\eps}{\sqrt{\lambda + f(t)}}\, dt.
\end{equation}
The near optimal transition is then given by 
\begin{equation}\nonumber
g_\eps^f(r):= \begin{cases}
0 & \text{ if }r<0,\\
(\zeta_\eps^f)^{-1}(r) & \text{ if } r\in [0,\zeta_\eps^f(1)], \\
1 & \text{ if } r>\zeta_\eps^f(1).
\end{cases}
\end{equation}
 Using that 
 \begin{equation}\label{eqn:derivativeOptTrans}
 \tfrac{d}{dr}g_\eps^f(r) = \frac{\sqrt{\lambda + f(g_\eps^f(r))}}{\eps} \quad \text{ for }r\in (0,\zeta_\eps^f(1)),
 \end{equation} we have
 \begin{align}
  \int_0^{\zeta_\eps^f(1)}\left(\frac{1}{\eps}f(g_\eps^f(r)) + \eps\left|\tfrac{d}{dr}g_\eps^f(r)\right|^2 \right) dr \leq& \int_0^{\zeta_\eps^f(1)} 2\sqrt{\lambda + f(g_\eps^f(r))}\left|\tfrac{d}{dr}g_\eps^f(r)\right| \, dr \nonumber \\
 = & \int_0^1 2 \sqrt{\lambda + f(r)} \, dr \leq \int_0^1 2 \sqrt{f(r)} \, dr + 2\sqrt{\lambda} ,\label{eqn:nearOptimal}
\end{align}  
where the first equality was found by a change of variables.
Finally, we note that $\zeta_\eps^f(1) \leq \eps/\sqrt{\lambda}$, which shows that the near optimal energy is found on a diffuse interface of width $O(\epsilon).$
}
\end{remark}

We now turn to the proof of the $\limsup$ inequality.

\begin{proof}[Proof of the $\limsup$ inequality in Theorem \ref{thm:main}.] Let $(c,u)\in BV(\Omega;\{0,1\})\times GSBD^2(\Omega)$ with ${E[c,u]<\infty}$. We construct a recovery sequence $(c_\eps,u_\eps,z_\eps)$ for such a pair. We first show, by a density argument, that we may suppose $c$ and $u$ have regular jump-sets. With regular jump-sets, our recovery sequence consists of the expected phase transitions and diffuse fracture approximations, with some care required to account for the overlap of the phase boundary and crack, or the irregular set in the approximation of Lemma \ref{lem:setApprox}. As before, we use the convention that $\delta = \delta(\eps).$

\emph{Step 1 (Application of density).} Fix $\eta>0.$
By the result of \cite[Theorem 1.1]{chambolleCrismale_approx} for any $u\in GSBD^2(\Omega)$, there is a set $M\subset \Omega$ that is a closed subset of a finite union of $C^1$ manifolds and a function $v \in SBV^2(\Omega;\R^d)\cap W^{1,\infty}(\Omega\setminus M;\R^d)$ such that 
$$\mathcal{H}^{d-1}(J_u\triangle J_v) <\eta, \quad \quad \mathcal{H}^{d-1}(J_v\triangle M) = 0,\quad \quad \|e(u)-e(v)\|_{L^2(\Omega)}<\eta, \quad \text{ and } \quad  d_{\rm meas}(u,v)<\eta,$$
 where $d_{\rm meas}$ is a metric for convergence in measure. 
Likewise, we apply Lemma \ref{lem:setApprox} to find $A \subset \Omega$ with $\partial A$ having regularity as specified in the lemma and $\|c - \chi_A\|_{BV(\Omega)}<\eta$---{of course we can also assume $\|c - \chi_A\|_{L^2(\Omega)}<\eta$}. 
It follows from a direct estimate that
$E[\chi_A,v] \leq E[c,u]+{C(c,u)}\eta $
and $\|c-\chi_A\|_{L^1(\Omega)} + d_{\rm meas}(u,v)\leq C\eta.$ By a diagonalization argument for $\eta\to 0$, it suffices to construct a recovery sequence for the pair $(\chi_A,v)$, which from now on we still denote by $(c,u)$. 

\emph{Step 2 (Construction of recovery sequences).}
We emphasize that $E[c,u]<\infty$, $c =: \chi_A$ is regular as in the approximation of Lemma \ref{lem:setApprox} with $\partial A$ a $C^1$ manifold (in a neighborhood of $\Omega$) away from $N\subset \subset \Omega$, a closed $\mathcal{H}^{d-1}$-null-set, and $u$ belongs to $W^{1,\infty}(\Omega\setminus M;\R^d)$ where $J_u$ is coincident with the set $M$ up to an $\mathcal{H}^{d-1}$-null-set. Importantly, the Minkowski content of $M$ exists with 
\begin{equation}\label{eqn:MMinkowski}
\lim_{r \to 0}\frac{\mathcal{L}^d(\dist(x,M)<r)}{2r} = \mathcal{H}^{d-1}(M)
\end{equation} by \cite[Theorem 2.106]{AmbrosioFuscoPallara}.

We fix $0<\lambda<1$ and recall the near optimal profiles $g^W_\eps$ and $g^V_{\delta}$ constructed in Remark \ref{rmk:nearOptimal}. We also rely on a smooth truncation function $\psi:\R\to [0,1]$ such that $\psi(r) = 0$ for $r<0$ and $\psi(r) = 1$ for $r>1.$ Since $\mathcal{H}^{d-1}(N) = 0$, we may cover $N$ with an open set $\mathcal{U}$ with $\mathcal{L}^d(\mathcal{U})<\eta$ such that $\partial \mathcal{U}\subset \subset \Omega$ is a $C^1$ manifold without boundary, $\mathcal{H}^{d-1}(\partial \mathcal{U}) < \eta$, and lastly $\mathcal{H}^{d-1}(\partial \mathcal{U} \cap \partial A) = 0$. To construct this set, one can cover $N$ with finitely many cubes compactly contained in $\Omega$ with the sum of their boundaries less than $\eta$,  mollify the characteristic function associated to the set, and then use Sard's theorem and the area formula to select a super level-set of the mollified function having the desired properties.

With these objects in hand, we can define the recovery sequence: The phase transition is given by
\begin{equation}\label{eqn:cRecovDef}
c_\eps(x) : = g_\eps^W(\operatorname{dist}(x,A))\psi\left(\frac{\operatorname{dist}(x,\mathcal{U})}{\eps}\right),
\end{equation}
where $\operatorname{dist}(x,A)$ is the distance to the extended set $A\subset \R^d.$
Recalling from Remark \ref{rmk:nearOptimal} that $g_\eps^W$ transitions between $0$ and $1$ on an interval of width at most $\eps/\sqrt{\lambda}\leq {\lambda}\delta$, where the inequality follows by the assumption that $\eps/\delta \to 0$ as $\eps\to 0$, we define 
\begin{equation}\nonumber
z_\eps(x) : = g_\delta^V\left(\operatorname{dist}(x,M) - {\lambda}\delta\right)
\end{equation}
so that $\{c_\eps \neq 0 \text{ or }1\}\subset \{z_\eps = 0\}\cup \{\operatorname{dist}(x,\mathcal{U})<\eps\}$ near $M \cap \partial A.$
Finally, we define the $C^1$ function
\begin{equation}\nonumber
u_\eps (x) : = u(x) \psi\left(\frac{\operatorname{dist}(x,M)}{\lambda\delta}\right),
\end{equation}
so that $\{u_\eps \neq u\}\subset \{z_\eps = 0\}$. It is straightforward to show that $(c_\eps,u_\eps,z_\eps)$ converges to $(c,u,1)$ in measure.

\emph{Step 3 (Limit energy).}
We go through each term of the energy individually.

\emph{Substep 3.1 (Elastic energy).}
Note, since $u\in W^{1,\infty}(\Omega\setminus M)$, that $|\nabla u_\eps(x)|\leq C(u)/(\lambda\delta)$ for $x$ with $\dist(x,M)<\lambda\delta$ and ${|\nabla u_\eps(x)|\leq C(u)}$ otherwise.
Consequently, defining $\mathcal{U}_\eps : = \{\dist(x,\mathcal{U})<\eps\}$, we can estimate that 
\begin{align*}
\int_{\Omega} & (z_\eps^2 + \delta^2)\CC{(e(u_\eps) - c_\eps e_0)}\, dx \\
\leq & \int_{\{\dist(x,M)>{\lambda}\delta \}\cap\left(\{\dist(x, \partial A)>\eps/\sqrt{\lambda}\}\cap \, \mathcal{U}_\eps^c \right)}(1 + \delta^2)\CC{(e(u) - c e_0)}\, dx \\
& +\int_{\{\dist(x,M)<{\lambda}\delta\}}C\delta^2 |e(u_\eps) - c_\eps e_0|^2 \, dx + \int_{\{\dist(x,M)>{\lambda}\delta \}\cap \left( \{ \dist(x, \partial A)<\eps/\sqrt{\lambda}\}\cup \, \mathcal{U}_\eps \right) }C|e(u_\eps) - c_\eps e_0|^2\, dx \\
\leq & (1+\delta^2)\int_{\Omega}  \CC{(e(u) - c e_0)}\, dx \\
& +C(u)\left(\frac{\mathcal{L}^d(\{\dist(x,M)<\lambda\delta\})}{{\lambda}\delta} \frac{\delta^2}{{\lambda}\delta} +  \mathcal{L}^d(\{\dist(x, \partial A)<\eps/\sqrt{\lambda}\}) +\mathcal{L}^d(\mathcal{U}_\eps)\right).
\end{align*}
By \eqref{eqn:MMinkowski} {and the regularity of $\partial A$ and $\partial \mathcal{U}$,} the last three terms are controlled by $\eta$ as $\eps \to 0$, and we have
\begin{equation}\label{eqn:elasticUpperBound}
\limsup_{\eps\to 0}\int_{\Omega}  (z_\eps^2 + \delta^2)\CC{(e(u_\eps) - c_\eps e_0)}\, dx \leq \int_{\Omega}  \CC{(e(u) - c e_0)}\, dx +C\eta.
\end{equation}

\emph{Substep 3.2 ({Interfacial} energy).} 
We recover the correct phase energy by applying the coarea formula in $\Omega\setminus \mathcal{U}$ with respect to $\dist(\cdot, A)$, wherein the hypotheses of Lemma \ref{lem:jumpExclusion} are satisfied for $\partial A$ in $\Omega\setminus \mathcal{U}$ and $\dist(\cdot, A)$ is defined in terms of $A\subset \R^d$. Precisely, noting that $\|\nabla c_\eps\|_{L^\infty(\Omega)}\leq C/\eps$ due to \eqref{eqn:cRecovDef} and \eqref{eqn:derivativeOptTrans}, we have 
\begin{align}
& \int_{\Omega}  \phi_\delta(z_\eps) \left(\frac{1}{\eps}W(c_\eps) + \eps \|\nabla c_\eps\|^2\right) dx \nonumber \\
&\leq  \int_{\{\dist(x,M)>{\lambda}\delta\}\cap \{\dist(x,\mathcal{U})>\eps\}}\phi_\delta(1)\left(\frac{1}{\eps}W(c_\eps) + \eps |\nabla c_\eps|^2\right) dx \nonumber\\
& \quad + \int_{\{\dist(x,M)<{\lambda}\delta\}\cap \{\dist(x,\mathcal{U})>\eps\}} \phi_\delta(0)\left(\frac{1}{\eps}W(c_\eps) + \eps |\nabla c_\eps|^2\right) dx + \int_{ \{0<\dist(x,\mathcal{U})<\eps\}}\frac{C}{\eps}\, dx \nonumber \\
&\leq  \phi_\delta (1)\int_{0}^{\tfrac{\eps }{\sqrt{\lambda}}} \left(\frac{1}{\eps}W(g_\eps^W(r)) + \eps |\tfrac{d}{dr}g_\eps^W(r)|^2\right)\mathcal{H}^{d-1}\left((\{\dist(x,A) = r\} \cap (\Omega\setminus \mathcal{U}))\setminus \{\dist(x,M) >{\lambda}\delta\} \right)  dr  \nonumber \\
& \quad + \phi_\delta (0)\int_{0}^{\tfrac{\eps }{\sqrt{\lambda}}} \left(\frac{1}{\eps}W(g_\eps^W(r)) + \eps |\tfrac{d}{dr}g_\eps^W(r)|^2\right)\mathcal{H}^{d-1}\left(\{\dist(x,A) = r\} \cap (\Omega\setminus \mathcal{U})\right)  dr \nonumber\\
& \quad + C\frac{\mathcal{L}^d(\{0<\dist(x,\mathcal{U})<\eps\})}{\eps} . \label{eqn:phaseEnergy}
\end{align}
Using \eqref{eqn:nearOptimal} and recalling the definition of $\alpha_{\rm surf}$ in \eqref{eqn:energyDensities}, we can bound the third-to-last term in the above inequality by
\begin{equation}\label{eqn:helpfulEst}
\phi_\delta (1)\left( \alpha_{\rm surf} + 2\sqrt{\lambda}\right)\sup_{0<r<\eps/\sqrt{\lambda}}\left\{\mathcal{H}^{d-1}((\{\dist(x,A) = r\} \cap (\Omega\setminus \mathcal{U}))\setminus \{\dist(x,M) >{\lambda}\delta\} )\right\}.
\end{equation}
A bound analogous to \eqref{eqn:helpfulEst} holds for the second-to-last term of \eqref{eqn:phaseEnergy}, instead with $\phi_\delta(1)$ and $M$ replaced by $\phi_\delta(0)$ and the empty-set $\emptyset$, respectively.
As ${\lambda}\delta > 3\eps/\sqrt{\lambda}$ for sufficiently small $\eps$, we can apply the bound \eqref{eqn:helpfulEst} and Lemma \ref{lem:jumpExclusion} (recall $\phi_\delta(1)\to 1$ and $\phi_\delta(0)\to 0$) while taking $\eps\to 0$ in \eqref{eqn:phaseEnergy} to find
\begin{equation}\label{eqn:phaseUpperBound}
\limsup_{\eps\to 0} \int_{\Omega}  \phi_\delta(z_\eps) \left(\frac{1}{\eps}W(c_\eps) + \eps \|\nabla c_\eps\|^2\right) dx \leq \alpha_{\rm surf}\mathcal{H}^{d-1}((\partial A \cap (\Omega \setminus \mathcal{U})) \setminus M) + C(\eta+\sqrt{\lambda}),
\end{equation}
where $C = C(A)>0$ and we have also used that $\lim_{\eps\to 0}\frac{\mathcal{L}^d(\{0<\dist(x,\mathcal{U})<\eps\})}{\eps} = \mathcal{H}^{d-1}(\partial \mathcal{U})<\eta$ (this follows from a one-sided version of \cite[Theorem 2.106]{AmbrosioFuscoPallara}, but an upper bound suffices).

\emph{Substep 3.3 (Crack energy).} We argue using an integration by parts trick, as in \cite{ambrosio-tortorelli-1990}.
The diffuse crack energy can be split into two regions as
\begin{align}
&\int_{\Omega}\left(\frac{1}{\delta}V(z_\eps) + \delta |\nabla z_\eps|^2\right)dx \nonumber \\
& = \int_{\{\dist(x,M)<\lambda\delta\}}\frac{1}{\delta}V(0)\, dx + \int_{\{\lambda\delta<\dist(x,M)<\lambda\delta+\zeta_\delta^V(1)\}}\left(\frac{1}{\delta}V(z_\eps) + \delta |\nabla z_\eps|^2\right)dx \nonumber \\
& = \lambda V(0)\frac{\mathcal{L}^d(\{\dist(x,M)<\lambda\delta\})}{\lambda\delta} \nonumber \\
& \quad +\int_{0}^{\zeta_\delta^V(1)}\left(\frac{1}{\delta}V(g_\delta^V(r )) + \delta |\tfrac{d}{dr}g_\delta^V(r )|^2\right)
\mathcal{H}^{d-1}(\{\dist(x,M) = r+\lambda \delta \})\, dr,\label{eqn:crackSplit}
\end{align}
where in the last equality we used the coarea formula with respect to $x\mapsto \dist(x,M) - \lambda \delta$. We deal with the last term in the above display. For this, we define $$h_\delta(r) : =  \left(\frac{1}{\delta}V(g_\delta^V(r )) + \delta |\tfrac{d}{dr}g_\delta^V(r )|^2\right) \quad \text{ for }r \in [0,\zeta_\delta^V(1)],$$
and note that $h_\delta \in C^1([0,\zeta_\delta^V(1)])$ since $(\zeta_\delta^V)^{-1}$ is in $C^2(\R)$, see Remark \ref{rmk:nearOptimal}. Note from \eqref{eqn:derivativeOptTrans} and $\tfrac{d^2}{dr^2}g_\delta^V(r) = V'(g_\delta^V(r))/(2\delta^2)$ that 
\begin{equation}\label{eqn:Hproperties}
h_\delta( 0 ) = \frac{2V(0) +\lambda}{\delta}, \quad  h_\delta(\zeta_\delta^V(1)) = \frac{\lambda}{\delta}, \quad \text{ and } \quad \tfrac{d}{dr} h_\delta(r) = \frac{2}{\delta}V'(g_\delta^V(r))\tfrac{d}{dr}g_\delta^V(r)\leq 0
\end{equation}
since $V' \leq 0$ by the assumption above \eqref{ass:V0}.
Further, we define the absolutely continuous function $$\operatorname{Vol}(r) : = \mathcal{L}^d(\{\dist(x,M)<r\}) \quad \text{ for } r>0$$ for which $\tfrac{d}{dr}\operatorname{Vol}(r) = \mathcal{H}^{d-1}(\{\dist(x,M)=r\})$ holds for almost every $r>0$---this can be shown using the coarea formula for $\mathcal{L}^d( \{s\leq\dist(x,M)<r\})$.
Consequently, an integration by parts allows us to rewrite the last term of \eqref{eqn:crackSplit} as
\begin{equation}\label{eqn:relation1}
\int_{0}^{\zeta_\delta^V(1)} h_\delta(r) \tfrac{d}{dr}\operatorname{Vol}(r+\lambda \delta)\, dr = [ h_\delta(r) \operatorname{Vol}(r+\lambda \delta) |_{r=0}^{\zeta_\delta^V(1)} - \int_{0}^{\zeta_\delta^V(1)} \tfrac{d}{dr} h_\delta(r) \operatorname{Vol}(r+\lambda \delta)\, dr.
\end{equation}
To estimate the last term above, by \eqref{eqn:MMinkowski}, for $\sigma>0$, we have that
\begin{equation}\label{eqn:minkowskiM}
\left|\frac{\operatorname{Vol}(r)}{2r} - \mathcal{H}^{d-1}(M)  \right|<\sigma \quad \text{ for all } 0 <r \ll 1,
\end{equation}
so that for $\delta \ll 1$, we have
\begin{align}
- \int_{0}^{\zeta_\delta^V(1)} \tfrac{d}{dr} h_\delta(r) \operatorname{Vol}(r+\lambda \delta)\, dr &\leq -2(\mathcal{H}^{d-1}(M) +\sigma)  \int_{0}^{\zeta_\delta^V(1)} \tfrac{d}{dr} h_\delta(r) (r+\lambda \delta) \, dr \nonumber \\
& \leq 2(\mathcal{H}^{d-1}(M) +\sigma) \left(\int_{0}^{\zeta_\delta^V(1)}h_\delta(r)\, dr - [h_\delta(r) (r+\lambda \delta)|_{r=0}^{\zeta_\delta^V(1)}\right),\label{eqn:relation2}
\end{align}
where we have used the last relation in \eqref{eqn:Hproperties} and an integration by parts. Using \eqref{eqn:nearOptimal} while recalling that $\alpha_{\rm frac}  := 4\int_0^1\sqrt{V(s)}ds$, evaluating Dirac masses (for lack of a simpler phrase) at the boundary points and dropping the negative terms, we conclude from \eqref{eqn:relation1} and \eqref{eqn:relation2} that
\begin{align}
& \int_{0}^{\zeta_\delta^V(1)} h_\delta(r) \tfrac{d}{dr}\operatorname{Vol}(r+\lambda \delta)\, dr \nonumber \\
& \leq (\mathcal{H}^{d-1}(M) +\sigma) (\alpha_{\rm frac} + 4\sqrt{\lambda}) + C(M)\left(\frac{\lambda}{\delta}\operatorname{Vol}(\zeta_\delta^V(1)+\lambda\delta) + \frac{2V(0) +\lambda}{\delta}(\lambda \delta) \right)  \nonumber \\
& \leq \alpha_{\rm frac}\mathcal{H}^{d-1}(M) +C(M)(\sigma +\sqrt{\lambda} ) \label{eqn:crackOuterBound}
\end{align}
where in the last line we have used $\zeta_\delta^V(1)\leq \delta /\sqrt{\lambda}$ and \eqref{eqn:minkowskiM} with $r =2\tfrac{\delta }{\sqrt{\lambda}} \geq \tfrac{\delta }{\sqrt{\lambda}} + \lambda\delta$ (we remark that the above estimate is why $\lambda$ was included in the definition of $z_\eps$).
Putting together \eqref{eqn:crackSplit} and \eqref{eqn:crackOuterBound}, while applying \eqref{eqn:minkowskiM} once again for $\delta(\eps) \ll 1$, we have
\begin{equation}\nonumber
\int_{\Omega}\left(\frac{1}{\delta}V(z_\eps) + \delta |\nabla z_\eps|^2\right)dx \leq  \alpha_{\rm frac}\mathcal{H}^{d-1}(M) +C(M)(\sigma + \sqrt{\lambda}  ).
\end{equation}
Taking the $\limsup$ of both sides, we make take $\sigma \to 0$ to find
\begin{equation}\label{eqn:crackUpperBound}
\limsup_{\eps \to 0}\int_{\Omega}\left(\frac{1}{\delta}V(z_\eps) + \delta |\nabla z_\eps|^2\right)dx \leq  \alpha_{\rm frac}\mathcal{H}^{d-1}(M) +C(M)\sqrt{\lambda} .
\end{equation}

\emph{Step 4 (Conclusion).} With \eqref{eqn:elasticUpperBound}, \eqref{eqn:phaseUpperBound}, and \eqref{eqn:crackUpperBound}, we have
$$\limsup_{\eps\to 0}E_\eps[c_\eps,u_\eps,z_\eps]\leq E[c,u] + C(\eta +\sqrt{\lambda}),$$
from which a diagonalization as $\eta\to 0$ and $\lambda\to 0$ concludes the construction of the recovery sequence.
\end{proof}

\section*{Acknowledgements}
This work grew out of S.W.'s Master's thesis at the Unviersity of Bonn. During much of the writing of the paper K.S. was at the Institute for Applied Mathematics at the University of Bonn. K.S. was supported by funding from the Deutsche Forschungsgemeinschaft (DFG, German Research Foundation) 
under Germany's Excellence Strategy -- EXC-2047/1 -- 390685813, the DFG project 211504053 - SFB 1060; also by the NSF (USA) under awards DMS-2108784 and DMS-2136198.

\bibliographystyle{amsplain}
\bibliography{SW_arxiv.bib}

\end{document}